\documentclass[11pt,a4paper]{amsart} 
\usepackage{amssymb,amscd,amsmath, young,mathrsfs}
\usepackage{mathrsfs, mathdots} 
\usepackage{float}
\usepackage[usenames]{color}
\usepackage{soul}
\usepackage{array,longtable}
\newcolumntype{C}{>{$}c<{$}}
\newcolumntype{R}{>{$}r<{$}}
\newcolumntype{L}{>{$}l<{$}}
\addtolength{\textwidth}{2cm}
\addtolength{\hoffset}{-1cm}

\linespread{1.1}

\theoremstyle{plain}
\newtheorem{thm}{Theorem}[section]
\newtheorem{lem}[thm]{Lemma}
\newtheorem{pro}[thm]{Proposition}
\newtheorem{cor}[thm]{Corollary}

\theoremstyle{remark}
\newtheorem{rem}[thm]{Remark}

\newtheorem{dfn}[thm]{Definition}
\newtheorem*{question*}{Question}

\newtheorem*{acknowledgements}{Acknowledgements}

\numberwithin{equation}{section}

\newcommand{\N}{\mathbb{N}}
\newcommand{\Z}{\mathbb{Z}}
\newcommand{\Q}{\mathbb{Q}}

\newcommand{\tensor}{\otimes}

\newcommand{\p}{\mathfrak{p}}

\renewcommand{\epsilon}{\varepsilon}

\renewcommand{\phi}{\varphi}

\def \p {\ensuremath{\mathfrak{p}}}

\makeatletter
\@namedef{subjclassname@2020}{
\textup{2020} Mathematics Subject Classification}
\makeatother

\author{Yifat Moadim-Lesimcha} \address{Department of Mathematics, Bar-Ilan University, Ramat Gan 5290002, Israel}
\author{Michael M.~Schein} \address{Department of Mathematics, Bar-Ilan University, Ramat Gan 5290002, Israel}
\email{yifatshlomov@gmail.com}
\email{mschein@math.biu.ac.il}
%\subjclass[2020]{11M41, 11S80, 20E07}

 \begin{document}

 \title[Pro-isomorphic zeta functions of $D^\ast$ Lie algebras]{Pro-isomorphic zeta functions of some $D^\ast$ Lie lattices of even rank} 

\begin{abstract}
We compute the local pro-isomorphic zeta functions at all but finitely many primes for a certain family of class-two-nilpotent Lie lattices of even rank, parametrized by irreducible monic non-linear polynomials $f(x) \in \Z [x]$.  These Lie lattices correspond to a family of groups introduced by Grunewald and Segal.  The result is expressed in terms of a combinatorially defined family of rational functions. 
\end{abstract}

 \maketitle

\section{Introduction}
\subsection{Pro-isomorphic zeta functions}
Let $G$ be a finitely generated group.  The pro-isomorphic zeta function of $G$, which was originally introduced by Grunewald, Segal, and Smith~\cite{GSS/88}, is the Dirichlet series $\zeta^\wedge_{G}(s) = \sum_{m = 0}^\infty a_m^\wedge (G) m^{-s}$.  Here $s$ is a complex variable and $a_m^\wedge(G)$ is the (necessarily finite) number of subgroups $H \leq G$ of index $m$ such that the profinite completion of $H$ is isomorphic to that of $G$.  In practice it is convenient to interpret this series as counting linear objects.  Let $\mathcal{L}$ be a $\Z$-algebra, which for our purposes is a free $\Z$-module of finite rank endowed with a $\Z$-bilinear multiplication.  Its pro-isomorphic zeta function is the Dirichlet series $\zeta^\wedge_{\mathcal{L}}(s) = \sum_{m = 0}^\infty b^\wedge_m(\mathcal{L}) m^{-s}$, where $b^\wedge_m(\mathcal{L})$ is the number of subalgebras $\mathcal{M} \leq \mathcal{L}$ of index $n$ such that $\mathcal{M} \tensor \Z_p \simeq \mathcal{L} \tensor \Z_p$ for all primes $p$.  An elementary but fundamental result~\cite[Proposition~4]{GSS/88} is the Euler decomposition $\zeta^\wedge_{\mathcal{L}}(s) = \prod_p \zeta^\wedge_{\mathcal{L},p}(s)$, where $\zeta^\wedge_{\mathcal{L},p}(s)$ counts only subalgebras of $p$-power index or, equivalently, $\Z_p$-subalgebras of $\mathcal{L} \tensor \Z_p$ that are isomorphic to $\mathcal{L} \tensor \Z_p$.  An analogous decomposition holds for finitely generated torsion-free nilpotent groups.  If $G$ is such a group, then there is a Lie lattice $\mathcal{L}(G)$, namely a $\Z$-algebra whose multiplication is a Lie bracket, such that $\zeta^\wedge_{\mathcal{L}(G),p}(s) = \zeta^\wedge_{G,p}(s)$ for all but finitely many $p$.  If $G$ is of class two, then this equality holds for all primes $p$; see, for instance,~\cite[\S4]{GSS/88} and~\cite[\S2.1]{BGS/22}.

The present work computes the pro-isomorphic zeta functions of many members of a certain family of class-two-nilpotent Lie lattices of even rank considered by Berman, Klopsch, and Onn~\cite{BKO/even}.   This family corresponds to the representatives constructed by Grunewald and Segal~\cite{GS/84} of commensurability classes of $D^\ast$-groups of even Hirsch length; see Section~\ref{sec:d.star} below.

\subsection{Statement of results}
We now present our main results more precisely.  Let $\Delta(x) \in \Z [x]$ be a primary polynomial, i.e.~$\Delta(x) = f(x)^\ell$ for an irreducible monic polynomial $f(x)$ and $\ell \in \N$.  If $\Delta(x) = x^n + a_{n-1} x^{n-1} + \cdots + a_1 x + a_0$ for $a_i \in \Z$, recall its companion matrix
$$ C_\Delta = \left( \begin{array}{ccccc} 
0 & 1 & 0 & \cdots & 0 \\
0 & 0 & 1 & \cdots & 0 \\
\vdots & \vdots & \vdots & \ddots & \vdots \\
0 & 0 & 0 & \cdots & 1 \\
-a_0 & -a_1 & -a_2 & \cdots & -a_{n-1}
\end{array} \right) \in \mathrm{M}_n(\Z).$$
Let $\mathcal{L}_\Delta$ be the Lie lattice of rank $2n + 2$ with basis 
$x_1, \dots, x_n, y_1, \dots, y_n, z_1, z_2$ and Lie bracket determined by the following:
\begin{itemize}
\item
$[x_i, x_j] = [y_i, y_j] = 0$ for all $1 \leq i,j \leq n$;
\item
$[x_i, y_j] = \delta_{ij} z_1 + (C_\Delta)_{ij} z_2$ for all $1 \leq i,j \leq n$, where $\delta_{ij}$ is the Kronecker delta;
\item
$z_1$ and $z_2$ lie in (and indeed span) the center of $\mathcal{L}_\Delta$.
\end{itemize}

We consider the case where $\Delta(x) = f(x)$ is an {\emph{irreducible}} polynomial of degree $n \geq 2$ and determine $\zeta^\wedge_{\mathcal{L}_f, p}(s)$ for all but finitely many $p$.  Indeed, let $\beta$ be a root of $f(x)$ and consider the number field $K_f = \Q(\beta)$.  Recall that the conductor $\mathcal{F}_f$ is the largest ideal of the ring of integers $\mathcal{O}_{K_f}$ that is contained in $\Z [\beta]$.  For all primes $p$ coprime to $\mathcal{F}_f$, we compute $\zeta^\wedge_{\mathcal{L}_f, p}(s)$ explicitly when $n \geq 3$.  Theorem~\ref{thm:quadratic} treats the case $n = 2$, which was actually treated 35 years ago, under a different name, by Grunewald, Segal, and Smith. 

Moreover, we prove the following finite uniformity statement.
Suppose that $K$ is a number field, $p$ is a prime, and $\mathbf{e} = (e_1, \dots, e_r)$ and $\mathbf{f} = (f_1, \dots, f_r)$ are vectors of natural numbers.  We say that $p$ has decomposition type $(\mathbf{e}, \mathbf{f})$ in $K$ if $p \mathcal{O}_K = \p_1^{e_1} \cdots \p_r^{e_r}$, where the $\p_i \triangleleft \mathcal{O}_K$ are distinct prime ideals with residue fields of cardinality $| \mathcal{O}_K / \p_i | = p^{f_i}$ for every $1 \leq i \leq r$.  This implies that $n = \sum_{i = 1}^r e_i f_i$.  Let $\mathbf{1}$ denote the vector $(1, \dots, 1)$.

\begin{thm} \label{thm:intro}
Let $n \geq 3$, and let $\mathbf{e} = (e_1, \dots, e_r)$ and $\mathbf{f} = (f_1, \dots, f_r)$ satisfy $n = \sum_{i = 1}^r e_i f_i$.  Consider the rational function
\begin{equation*}
W_{\mathbf{e}, \mathbf{f}}(X,Y) = 
\prod_{i = 1}^r \left( \frac{1}{1 - X^{f_i}} \right) \sum_{I \subseteq \{ 1, \dots, r \}} (-1)^{|I|} \frac{X^{\sum_{i \in I} f_i}}{1 - X^{4n + \sum_{i \in I} e_i f_i} Y^{n+2}} \in \Q(X,Y).
\end{equation*}
If $f(x) \in \Z [x]$ is any irreducible monic polynomial of degree $n$, and if the prime $p$ is coprime to $\mathcal{F}_f$ and has decomposition type $(\mathbf{e}, \mathbf{f})$ in $K_f$, then
$\zeta^\wedge_{\mathcal{L}_f, p}(s) = W_{\mathbf{e}, \mathbf{f}}(p, p^{-s})$.

Moreover, if $\mathbf{e} = \mathbf{1}$, i.e. $p$ is unramified in $K_f$, then $W_{\mathbf{1}, \mathbf{f}}(X,Y)$ satisfies the following functional equation:
\begin{equation} \label{equ:functional.eq}
W_{\mathbf{1}, \mathbf{f}}(X^{-1}, Y^{-1}) = (-1)^{r+1} p^{9n - (2n + 4)s} W_{\mathbf{1}, \mathbf{f}}(X,Y).
\end{equation}
\end{thm}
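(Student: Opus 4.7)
The plan is to follow the integral-formula approach of Grunewald, Segal, and Smith. Writing $\bfG$ for the $\Q_p$-algebraic group of Lie algebra automorphisms of $\mathcal{L}_f \tensor \Q_p$ and $\rho$ for its natural faithful representation on $\mathcal{L}_f \tensor \Q_p$, the standard reduction gives
\begin{equation*}
\zeta^\wedge_{\mathcal{L}_f, p}(s) = \int_{\bfG^+} |\det \rho(g)|_p^s \, d\mu(g),
\end{equation*}
where $\bfG^+ \subseteq \bfG(\Q_p)$ is the subsemigroup of automorphisms preserving $\mathcal{L}_f \tensor \Z_p$ and $\mu$ is the right Haar measure normalised so that $\mu(\bfG(\Z_p)) = 1$.

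The key structural input is the description of $\bfG$ under our hypothesis $p \nmid \mcF_f$. Any automorphism of $\mathcal{L}_f \tensor \Q_p$ preserves the centre $\langle z_1, z_2 \rangle$, and compatibility with the bracket $[x_i, y_j] = \delta_{ij} z_1 + (C_f)_{ij} z_2$ forces the non-swapping blocks on $\langle x_1, \dots, x_n \rangle$ and $\langle y_1, \dots, y_n \rangle$ to lie in the commutant of $C_f$, which over $\Q_p$ is $\Q_p[C_f] \simeq \Q_p[x]/(f(x))$. The corresponding integral model is then $\Z_p[C_f] = \mcO_{K_f} \tensor \Z_p \simeq \prod_{i=1}^r \lri_i$, where each $\lri_i$ is a complete discrete valuation ring of residue degree $f_i$ and ramification index $e_i$. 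Together with the action on $\langle z_1, z_2 \rangle$ and a swap involution $x \leftrightarrow y$, this pins down $\bfG^+$.

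The integral then decomposes with respect to the factorisation $\Z_p[C_f] \simeq \prod_i \lri_i$. Parameterising an element of $\bfG^+$ in terms of its local components, the lattice-preservation condition cuts out a region defined by conditions on the $\pi_i$-valuations; the contribution of strict positivity of valuation in the components indexed by $I \subseteq \{1, \dots, r\}$ is extracted by inclusion--exclusion, producing the alternating sum $\sum_I (-1)^{|I|}$. Within each stratum the remaining $p$-adic integration is a geometric series that sums to $(1 - X^{4n + \sum_{i \in I} e_i f_i} Y^{n+2})^{-1}$ with $X = p$, $Y = p^{-s}$, while the contributions of the unit components and the Haar-measure normalisation assemble to the prefactor $\prod_i (1 - X^{f_i})^{-1} \cdot X^{\sum_{i \in I} f_i}$. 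Summing over $I$ yields $W_{\bfe, \bff}(p, p^{-s})$.

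The functional equation in the unramified case ($\bfe = \bfo$) is a purely algebraic identity. Setting $t_i = X^{f_i}$ and $U = X^{4n} Y^{n+2}$, the partial-fraction identity $t_I / (1 - U t_I) = -U^{-1} + U^{-1}(1 - U t_I)^{-1}$ combined with $\sum_I (-1)^{|I|} = 0$ (valid for $r \geq 1$) rewrites
\begin{equation*}
W_{\bfo, \bff}(X, Y) = \frac{1}{U} \prod_{i=1}^r \frac{1}{1 - t_i} \sum_{I \subseteq \{1,\dots,r\}} \frac{(-1)^{|I|}}{1 - U \prod_{i \in I} t_i}.
\end{equation*}
Substituting $t_i \mapsto t_i^{-1}$ and $U \mapsto U^{-1}$ one computes $\prod_i (1 - t_i^{-1})^{-1} = (-1)^r X^n \prod_i (1 - t_i)^{-1}$ and, using the same identity, $\sum_I (-1)^{|I|} (1 - U^{-1} t_I^{-1})^{-1} = -\sum_I (-1)^{|I|} (1 - U t_I)^{-1}$, giving $W_{\bfo, \bff}(X^{-1}, Y^{-1}) = (-1)^{r+1} X^n U^2 \, W_{\bfo, \bff}(X, Y)$. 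Since $X^n U^2 = X^{9n} Y^{2n+4}$ specialises to $p^{9n - (2n+4)s}$, this is~(\ref{equ:functional.eq}). The main obstacle will be the detailed structure theory of $\bfG$ and the precise computation of the Haar-measure Jacobians — especially the bookkeeping for the $x \leftrightarrow y$ swap and for the fact that $C_f$ and $C_f^T$ are $\Q$-conjugate but distinct, which constrains the allowed off-diagonal blocks — after which the explicit formula and the functional equation follow routinely.
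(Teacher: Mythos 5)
Your overall strategy is the paper's: reduce to the $p$-adic integral of $|\det\rho(g)|_p^s$ over the semigroup $\bfG^+$, use the semidirect-product structure of the automorphism group (here $\rho_1(\mathbf{G}_m) \ltimes (\rho_2(\mathrm{Res}_{K/\Q}\mathbf{SL}_2) \ltimes \rho_3(\mathbf{G}_a^{4n}))$), factor the integral accordingly, and sum a geometric series in the scalar $a$. Your treatment of the functional equation is valid and is in fact a slightly more direct calculation than the paper's: you apply the partial-fraction identity $t_I/(1-Ut_I) = -U^{-1}+U^{-1}(1-Ut_I)^{-1}$ together with $\sum_I(-1)^{|I|}=0$ and substitute $X\mapsto X^{-1}$, $Y\mapsto Y^{-1}$ directly, whereas the paper packages the same self-reciprocity into an abstract identity for a function $\Phi_r$ in $2^r$ variables (Definition~\ref{def:function}, Proposition~\ref{pro:funct.eq}). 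Both work; your version is concrete, the paper's isolates a reusable combinatorial lemma.

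However, there is a genuine gap in the heart of the computation, and you misdiagnose where the difficulty lies. You assert that after decomposing $\Z_p[C_f]\simeq\prod_i\lri_i$, the lattice-preservation condition is ``a region defined by conditions on the $\pi_i$-valuations'' from which the answer falls out by inclusion--exclusion and geometric series; and at the end you identify the main obstacle as Jacobian bookkeeping for the $x\leftrightarrow y$ swap. Neither is the real issue. Once the automorphism group is written as $\rho_1(\Q_p^\times)\ltimes(\rho_2(\prod_i\mathrm{SL}_2(F_i))\ltimes\rho_3(\Q_p^{4n}))$, the Jacobian and determinant bookkeeping is routine, and the integral reduces (for each fixed $v_p(a)=v$) to computing the Haar measure of the subset
$S_{F_i}(a)=\{A\in\mathrm{SL}_2(F_i):\bigl(\begin{smallmatrix}a\alpha_{11}&\alpha_{12}\\a\alpha_{21}&\alpha_{22}\end{smallmatrix}\bigr)\in\mathrm{M}_2(\mcO_{F_i})\}$.
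This set is \emph{not} cut out by independent valuation conditions on the matrix entries because of the determinant-one constraint, so it cannot be handled by the naive strata you invoke. The paper's key technical input is the Cartan decomposition of $\mathrm{SL}_2(F_i)$ (Lemma~\ref{lem:coset.reps}): one lists right-coset representatives of $\mathrm{SL}_2(\mcO_{F_i})$ in $\mathrm{SL}_2(F_i)$, identifies which of them lie in $S_{F_i}(a)$, and counts them to obtain $\nu_{F_i}(S_{F_i}(a))=\frac{1-q_i^{e_iv_p(a)+1}}{1-q_i}$ (Corollary~\ref{cor:measure.computation}). Multiplying over $i$, summing the geometric series in $v=v_p(a)$, and then expanding the product $\prod_i(1-q_i^{e_iv+1})$ is what produces the alternating sum over $I\subseteq[r]$; the inclusion--exclusion is an algebraic expansion of that numerator, not an \emph{a priori} stratification. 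Without the Cartan-decomposition measure count, your plan does not produce a proof; and indeed the paper explicitly remarks (Remark~\ref{rem:no.cartan}) that the need for such a decomposition is precisely what forces the restriction to irreducible $\Delta$. Your proposal should be revised to supply this step.
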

In fact, in~\eqref{equ:wrt.phi} below we realize the functions $W_{\mathbf{1}, \mathbf{f}}$ as specializations of combinatorially defined functions $\Phi_r$ in $2^r$ variables introduced for every $r \in \N$ in Definition~\ref{def:function}.  While these $\Phi_r$ are reminiscent of some functions that have appeared recently in the literature in the context of enumerative problems arising from algebra~\cite{ScheinVoll/15, CSV/19, RV/19, MV/21}, they do not seem to be special cases of them.  Then~\eqref{equ:functional.eq} is immediate from Proposition~\ref{pro:funct.eq}, which proves a self-reciprocity of the functions $\Phi_r$ under inversion of the variables.

We illustrate the explicit formulas of Theorem~\ref{thm:intro} in a simple example:
\begin{cor} \label{thm:cubic}
Let $f(x) = x^3 - 2$.  
\begin{enumerate}
\item
If $p \equiv 1 \, \mathrm{mod} \, 3$ and there exist $a,b \in \Z$ such that $p = a^2 + 27b^2$ (equivalently, if $p$ is totally split in $K_f = \Q(\sqrt[3]{2})$), then
$$ \zeta^\wedge_{\mathcal{L}_f, p}(s) = \frac{1 + 2 p^{13 - 5s} + 2 p^{14 - 5s} + p^{27 - 10s}}{(1 - p^{12 - 5s})(1 - p^{13 - 5s})(1 - p^{14 - 5s})(1 - p^{15 - 5s})}.$$
\item
If $p \equiv 1 \, \mathrm{mod} \, 3$ and there do not exist $a,b \in \Z$ such that $p = a^2 + 27b^2$ (equivalently, if $p$ is inert in $K_f$), then
$$ \zeta^\wedge_{\mathcal{L}_f, p}(s) = \frac{1}{(1 - p^{12 - 5s})(1 - p^{15 - 5s})}.$$
\item
If $p > 2$ and $p \equiv 2 \, \mathrm{mod} \, 3$ (equivalently, if $p \mathcal{O}_{K_f} = \p_1 \p_2$ with $\mathcal{O}_{K_f}/\p_1 \simeq \mathbb{F}_p$ and $\mathcal{O}_{K_f}/\p_2 \simeq \mathbb{F}_{p^2}$) then
$$ \zeta^\wedge_{\mathcal{L}_f, p}(s) = \frac{1 - p^{27 - 10s}}{(1 - p^{12 - 5s})(1 - p^{13 - 5s})(1 - p^{14 - 5s})(1 - p^{15 - 5s})}.$$
\item
If $p \in \{ 2, 3 \}$ (equivalently, if $p$ is totally ramified in $K_f$), then
$$ \zeta^\wedge_{\mathcal{L}_f, p}(s) = \frac{1 + p^{13 - 5s} + p^{14 - 5s}}{(1 - p^{12 - 5s})(1 - p^{15 - 5s})} .$$
\end{enumerate}
\end{cor}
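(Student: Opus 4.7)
The corollary is a direct specialization of Theorem \ref{thm:intro} to $f(x) = x^3 - 2$. The plan is to carry out three largely routine steps: verify that the hypothesis of Theorem \ref{thm:intro} is vacuous for every prime $p$, classify the decomposition types of primes in $K_f = \Q(\sqrt[3]{2})$, and finally substitute and simplify the rational function $W_{\mathbf{e}, \mathbf{f}}(p, p^{-s})$ in each case.

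First, I would note that the ring of integers of $K_f$ is $\mathcal{O}_{K_f} = \Z[\sqrt[3]{2}]$, so that the conductor satisfies $\mathcal{F}_f = \mathcal{O}_{K_f}$ and the coprimality hypothesis ``$p$ coprime to $\mathcal{F}_f$'' of Theorem \ref{thm:intro} is vacuous. This is classical: it follows from computing the discriminant $\mathrm{disc}(1, \sqrt[3]{2}, \sqrt[3]{4}) = -108 = -2^2 \cdot 3^3$ and checking directly that no element $(a + b\sqrt[3]{2} + c \sqrt[3]{4})/p$ with $p \in \{2, 3\}$ and $a, b, c \in \{0, \ldots, p-1\}$ not all zero is integral.

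Next, I would enumerate the decomposition types by factoring $x^3 - 2$ modulo each $p$ via the Kummer--Dedekind theorem. For $p \in \{2, 3\}$, direct factorizations ($2 = (\sqrt[3]{2})^3$ and $x^3 - 2 \equiv (x+1)^3 \pmod 3$) yield total ramification with type $((3), (1))$. For $p \equiv 2 \pmod 3$, cubing is a bijection on $\F_p^\times$, so $x^3 - 2$ factors as a linear times an irreducible quadratic, giving type $((1,1), (1,2))$. For $p \equiv 1 \pmod 3$, the polynomial either splits completely, yielding type $((1,1,1),(1,1,1))$, or is irreducible, yielding type $((1),(3))$. The decisive input is the classical theorem that $2$ is a cube modulo a prime $p \equiv 1 \pmod 3$ if and only if $p$ is representable as $p = a^2 + 27b^2$; this can be proved via class field theory by identifying $\Q(\sqrt[3]{2}, \sqrt{-3})$ as the ring class field of conductor $3$ over $\Q(\sqrt{-3})$ (see, e.g., Cox, \emph{Primes of the form $x^2 + n y^2$}). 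This is the main obstacle of the argument, but it is classical.

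Finally, I would substitute each decomposition type into the formula of Theorem \ref{thm:intro} with $n = 3$, so that $4n + \sum_{i \in I} e_i f_i$ takes values in $\{12, 13, 14, 15\}$ and $Y^{n+2} = Y^5$. In case (2), $r = 1$ with $e_1 f_1 = 3$ and $f_1 = 3$, and combining the two summands over a common denominator produces an exact factor of $(1 - X^3)$ in the numerator that cancels the prefactor $1/(1 - X^3)$. In case (4), $r = 1$ with $e_1 f_1 = 3$ and $f_1 = 1$; combining the two summands and factoring the result by grouping yields a numerator divisible by $(1 - X)$, which cancels the prefactor and leaves $1 + X^{13}Y^5 + X^{14}Y^5$ on top. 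In case (3) with $r = 2$, grouping the four summands as the pairs $(\emptyset, \{1\})$ and $(\{2\}, \{1,2\})$ yields two expressions, each carrying a factor $(1 - X)$; a further combination extracts the additional factor $(1 - X^2)$ needed to cancel the prefactor $1/[(1-X)(1-X^2)]$, leaving $1 - X^{27}Y^{10}$ in the numerator. In case (1) with $r = 3$, the eight summands assemble by $|I|$ into four pieces weighted by $\binom{3}{k}$; a common-denominator computation produces a factor $(1 - X)^3$ in the numerator, which cancels the prefactor and leaves the stated polynomial $1 + 2X^{13}Y^5 + 2X^{14}Y^5 + X^{27}Y^{10}$.
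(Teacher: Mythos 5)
Your proposal is correct and follows essentially the same approach as the paper: verify $\mathcal{O}_{K_f} = \Z[\sqrt[3]{2}]$ so the conductor hypothesis is vacuous, classify decomposition types of primes in $\Q(\sqrt[3]{2})$ using Kummer--Dedekind and the classical criterion (via Cox) that $2$ is a cube mod $p\equiv 1\pmod 3$ iff $p = a^2 + 27b^2$, and then substitute into the formula of Theorem~\ref{thm:intro}. The paper dismisses the final substitution as ``straightforward computation,'' whereas you spell out the cancellations explicitly, but the underlying argument is identical.
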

Note that the rational function governing $\zeta^\wedge_{\mathcal{L}_f,p}(s)$ for the ramified primes $p \in \{2, 3 \}$ does not satisfy a functional equation for any symmetry factor.

\begin{rem}  
Observe in passing that the functional equation~\eqref{equ:functional.eq} satisfies~\cite[Conjecture~1.5]{BKO/even}.  Unlike the situation for zeta functions counting subrings, ideals, and some related structures~\cite{Voll/10, LeeVoll/20}, it is not known in general whether local pro-isomorphic zeta functions of nilpotent Lie lattices, even of class two, satisfy functional equations.  See~\cite{BK/15} for an example of a Lie lattice of class four none of whose local pro-isomorphic zeta functions satisfies a functional equation.  However, Berman, Klopsch, and Onn have conjectured, based on a study of known examples, that if $\mathcal{L}$ is graded and $\zeta^{\wedge}_{\mathcal{L},p}(s)$ satisfies a functional equation at almost all primes $p$, then the exponent of $p^{-s}$ in the symmetry factor at almost all primes should be the weight of a minimal grading of $\mathcal{L}$; see~\cite{BKO/even} for definitions and details.  Indeed, the Lie lattices $\mathcal{L}_f$ considered above are naturally graded in the sense of~\cite{BKO/even}, and hence the weight of a minimal grading is $\mathrm{rk}_{\Z} \mathcal{L}_f + \mathrm{rk}_{\Z} [\mathcal{L}_f, \mathcal{L}_f] = (2n + 2) + 2 = 2n + 4$.
\end{rem}

\subsection{The quadratic case} \label{sec:quadratic}
For completeness, we state the pro-isomorphic zeta functions $\zeta^\wedge_{\mathcal{L}_f, p}(s)$ at all but finitely many primes when $f(x) \in \Z [x]$ is an irreducible monic quadratic polynomial.  Note that there are only three decomposition types for a prime in a quadratic number field: inert ($(\mathbf{e}, \mathbf{f}) = ((1),(2))$), totally split ($(\mathbf{e}, \mathbf{f}) = ((1,1), (1,1))$) and totally ramified ($(\mathbf{e}, \mathbf{f}) = ((2),(1))$).  The following claim is essentially due to Grunewald, Segal, and Smith~\cite{GSS/88} and is analogous to Theorem~\ref{thm:intro}.

\begin{thm} \label{thm:quadratic}
Consider the rational function 
$$W(X,Y) = \frac{1}{(1 - X^4 Y^2)(1 - X^5 Y^2)}.$$  
For each of the three decomposition types $(\mathbf{e}, \mathbf{f})$ above, set $W_{\mathbf{e}, \mathbf{f}}(X,Y) = \prod_{i = 1}^r W(X^{f_i}, Y^{f_i})$.
If $f[x] \in \Z [x]$ is an irreducible monic quadratic polynomial and $p$ is coprime to $\mathcal{F}_f$ and has decomposition type $(\mathbf{e}, \mathbf{f})$ in the quadratic number field $K_f$, then 
$$ \zeta^\wedge_{\mathcal{L}_f, p}(s) = W_{\mathbf{e}, \mathbf{f}}(p, p^{-s}).$$
For all three decomposition types, the following functional equation holds:
$$ W_{\mathbf{e}, \mathbf{f}}(X^{-1}, Y^{-1}) = p^{(\sum_{i = 1}^r f_i)(9 - 4s)} W_{\mathbf{e}, \mathbf{f}}(X,Y).$$
\end{thm}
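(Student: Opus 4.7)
My plan is to reduce Theorem~\ref{thm:quadratic} to a computation carried out (in somewhat different notation) by Grunewald, Segal, and Smith in~\cite{GSS/88}. The central observation is that for $n = 2$, the localization $\mathcal{L}_f \otimes \Z_p$ at any prime $p$ coprime to the conductor $\mathcal{F}_f$ carries a canonical module structure over the quadratic ring $\mathcal{O} := \mathcal{O}_{K_f} \otimes \Z_p \cong \Z[\beta] \otimes \Z_p$, induced by the action of $\Z_p[C_f] \subset \Mat_2(\Z_p)$ on each of $V$, $W$, and $Z$. A direct calculation with the bracket relations $[x_i, y_j] = \delta_{ij} z_1 + (C_f)_{ij} z_2$ shows that any triple $(\phi_V, \phi_W, \phi_Z) \in \GL(V) \times \GL(W) \times \GL(Z)$ preserving the pair of $Z$-valued pairings defined by $I$ and $C_f$ must satisfy $\phi_V^T \phi_W,\, \phi_V^T C_f \phi_W \in \Z_p[C_f] \cong \mathcal{O}$, so the relevant automorphism group is essentially built from $\mathcal{O}^\times$, the units of the quadratic ring.

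Using the CRT-style decomposition $\mathcal{O} \cong \prod_{i = 1}^r \mathcal{O}_{\mathfrak{p}_i}$, where $\mathcal{O}_{\mathfrak{p}_i}$ is the complete discrete valuation ring corresponding to the prime $\mathfrak{p}_i$ of $\mathcal{O}_{K_f}$ above $p$ (with residue degree $f_i$ and ramification index $e_i$), the Lie lattice, its automorphism group, and the associated $p$-adic integral computing the pro-isomorphic zeta function (via the standard formalism of~\cite{GSS/88}) all decompose accordingly. The upshot is that $\zeta^\wedge_{\mathcal{L}_f, p}(s)$ factors as $\prod_{i = 1}^r W(p^{f_i}, p^{-f_i s}) = W_{\mathbf{e}, \mathbf{f}}(p, p^{-s})$, where each factor $W(p^{f_i}, p^{-f_i s})$ is the contribution of $\mathcal{O}_{\mathfrak{p}_i}$ to the integral, a special case of the rank-$3$ Heisenberg-type computation carried out in~\cite{GSS/88}.

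Finally, the functional equation follows from the direct identity $W(X^{-1}, Y^{-1}) = X^9 Y^4 W(X, Y)$, obtained by applying $(1 - A^{-1})^{-1} = -A(1 - A)^{-1}$ to each of the two factors of $W(X, Y)$. Taking the product over $i$ yields $W_{\mathbf{e}, \mathbf{f}}(X^{-1}, Y^{-1}) = X^{9 \sum f_i} Y^{4 \sum f_i} W_{\mathbf{e}, \mathbf{f}}(X, Y)$, which at $(X, Y) = (p, p^{-s})$ gives the stated symmetry factor $p^{(\sum f_i)(9 - 4s)}$.

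The main obstacle I anticipate is the careful analysis of the automorphism group and the verification that the $p$-adic integral genuinely factors over the CRT decomposition. This is particularly delicate in the split case ($r = 2$, $\mathbf{f} = (1,1)$), where the two factors of $\mathcal{O}$ are isomorphic as $\Z_p$-algebras, and one must argue that the canonical $\mathcal{O}$-module structure on $\mathcal{L}_f \otimes \Z_p$ (rather than just the underlying $\Z_p$-Lie lattice structure) is preserved by every pro-isomorphism, so that the two integrals decouple; once this is established, the quadratic case is reduced to two copies of the computation for $\Z_p$ itself.
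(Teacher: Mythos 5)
Your reduction to the Grunewald--Segal--Smith computation is correct and is, at bottom, the same strategy the paper uses; the difference lies in how the identification with the Heisenberg lattice over a quadratic ring is effected. The paper bypasses your automorphism-group analysis entirely: using \emph{both} roots $\beta_1,\beta_2$ of $f$ it exhibits an explicit $\Z_p$-Lie algebra isomorphism $\mathcal{L}_f \tensor \Z_p \simeq (\mathcal{H}\tensor_{\Z}\mathcal{O}_{K_f})\tensor\Z_p$, namely $(x_1,x_2,y_1,y_2,z_1,z_2)\mapsto (x\tensor 1,\, x\tensor\beta_1,\, y\tensor(-\beta_2),\, y\tensor 1,\, z\tensor(-\beta_2),\, z\tensor 1)$, valid since $\Z_p[\beta_1]=\mathcal{O}_{K_f}\tensor\Z_p$ for $p$ coprime to $\mathcal{F}_f$. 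After that, everything you propose to verify by hand --- the $\mathcal{O}$-module structure, the shape of the automorphism group, and crucially the decoupling of the $p$-adic integral over the primes $\p_i\mid p$ (including the split case you rightly single out as delicate) --- is inherited wholesale from the known case: the factorization is \cite[Proposition~3.14]{BGS/22} and rests on the rigidity of $\mathcal{H}\tensor\Q_p$, and the per-prime factors are exactly Theorem~7.1 and Lemma~7.2 of \cite{GSS/88}. Your direct route is viable because for $n=2$ one has $\mathrm{span}_{\Z_p}\{I, C_f\}=\Z_p[C_f]$ (precisely what fails for $n\geq 3$ and forces the smaller scalar group there), but be aware that the reductive part of the automorphism group is not merely $\mathcal{O}^\times$: it is an $\SL_2(\mathcal{O}\tensor\Q_p)$ factor together with a $(K_f\tensor\Q_p)^\times$ scaling of the center, plus the unipotent radical, so executing your plan from scratch amounts to redoing the paper's Section~3 with $\Q_p^\times$ replaced by $(K_f\tensor\Q_p)^\times$. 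Your verification of the functional equation via $W(X^{-1},Y^{-1})=X^9Y^4\,W(X,Y)$ is correct and more explicit than the paper, which leaves this step implicit.
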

\begin{proof}
Let $\beta_1$ and $\beta_2$ be the roots of $f(x)$.  They are both contained in the Galois extension $K_f / \Q$.  Let $\mathcal{H}$ be the Heisenberg Lie lattice $\mathcal{H} = \langle x, y, z \rangle_{\Z}$ such that $[x,y] = z$ and the product of any other pair of generators vanishes.  Consider $\mathcal{H} \tensor_{\Z} \mathcal{O}_{K_f}$ as a Lie lattice by restriction of scalars.  Since $p$ is coprime to $\mathcal{F}_f$, we have $\Z_p [\beta_1] = \mathcal{O}_{K_f} \tensor_{\Z} \Z_p$ and it is easy to verify that there is an isomorphism
$\varphi: \mathcal{L}_f \tensor_{\Z} \Z_p \stackrel{\sim}{\to} (\mathcal{H} \tensor_{\Z} \mathcal{O}_{K_f}) \tensor_{\Z} \Z_p$ given by
\begin{equation*}
(x_1, x_2, y_1, y_2, z_1, z_2) \mapsto (x \tensor 1, x \tensor \beta_1, y \tensor (-\beta_2), y \tensor 1, z \tensor (-\beta_2), z \tensor 1).
\end{equation*}
Thus $\zeta^\wedge_{\mathcal{L}_f, p}(s) = \zeta^\wedge_{\mathcal{H} \tensor_{\Z} \mathcal{O}_{K_f}, p}(s)$, and the right-hand side of this equality was computed by Grunewald, Segal, and Smith in Theorem~7.1 and Lemma~7.2 of~\cite{GSS/88}; see Theorem~5.10 and Remark~5.12 of~\cite{BGS/22} for an alternative derivation of the same explicit result.
\end{proof}

Observe that the local pro-isomorphic zeta functions $\zeta^\wedge_{\mathcal{L}_f, p}(s)$ appearing in Theorem~\ref{thm:quadratic} decompose as products of factors parametrized by primes of $K_f$ dividing $p$.  This is a special case of a general phenomenon~\cite[Proposition~3.14]{BGS/22}.  The Lie algebras $\mathcal{H} \tensor_{\Z} \Q_p$ satisfy a rigidity property~\cite[Definition~3.8]{BGS/22} originally introduced by Segal~\cite{Segal/89}; as a consequence, the pro-isomorphic zeta function $\zeta^\wedge_{\mathcal{H} \tensor \mathcal{O}_K, p}(s)$ may be computed easily for any number field $K$.  Such rigidity does not hold for the Lie algebras $\mathcal{L}_f \tensor_{\Z} \Q_p$ of Theorem~\ref{thm:intro}; this is essentially a consequence of the arithmetic of the number field $K_f$, which is larger than $\Q$, controlling the local pro-isomorphic zeta functions $\zeta^\wedge_{\mathcal{L}_f, p}(s)$.

\subsection{Overview} \label{sec:overview}
It is a simple but fundamental observation that computations of local factors of pro-isomorphic zeta functions can be reduced to $p$-adic integrals of a certain form.  Consider the $\Q$-Lie algebra $L_\Delta = \mathcal{L}_\Delta \tensor_{\Z} \Q$, and let $\mathbf{G}_{\Delta}$ be its algebraic automorphism group.  This is the algebraic group defined over $\Q$ characterized by the property that $\mathbf{G}_\Delta (E) \simeq \mathrm{Aut}_E (L_\Delta \tensor_{\Q} E)$ for every field $E$ of characteristic zero.  Fixing the ordered basis $(x_1, \dots, x_n, y_1, \dots, y_n, z_1, z_2)$ of $\mathcal{L}_\Delta$ gives an embedding $\mathbf{G}_\Delta \hookrightarrow \mathrm{GL}_{2n + 2}$.  Now set $G_\Delta^+(\Q_p) = \mathbf{G}_\Delta(\Q_p) \cap \mathrm{M}_{2n + 2}(\Z_p)$, and let $G_\Delta(\Z_p) = \mathbf{G}_\Delta(\Q_p) \cap \mathrm{GL}_{2n + 2}(\Z_p)$.  Let $\mu$ be the right Haar measure on the group $\mathbf{G}_{\Delta}(\Q_p)$, normalized so that $\mu(G_\Delta(\Z_p)) = 1$.  Then by~\cite[Proposition~3.4]{GSS/88} we have
\begin{equation} \label{equ:padic.integral}
\zeta^\wedge_{\mathcal{L}_\Delta, p}(s) = \int_{G_\Delta^+(\Q_p)} | \det g |_p^s d \mu,
\end{equation}
where $| \, \cdot \, |_p$ is the normalized valuation on $\Q_p$.  The structure of $\mathbf{G}_\Delta$, for all primary polynomials $\Delta(x) = f(x)^\ell$, was determined by Berman, Klopsch, and Onn; see Proposition~\ref{pro:automorphism.structure} below for the case $\mathrm{deg} \, f(x) \geq 3$.  When $\Delta(x) = f(x)$ is irreducible, the domain of integration of~\eqref{equ:padic.integral} is sufficiently simple that the integral may be computed directly using the Cartan decomposition of $\mathrm{SL}_2(F)$ for $p$-adic fields $F / \Q_p$.  See Remark~\ref{rem:no.cartan} for the reason for the restriction to the irreducible case.  The earlier work cited in the proof of Theorem~\ref{thm:quadratic} also amounts to the computation of an integral~\eqref{equ:padic.integral}.
After establishing several preliminary results, we prove Theorem~\ref{thm:intro} and its corollary in Section~\ref{sec:main.computation} below.

The algebraic group $\mathbf{G}_\Delta$ has a particularly complicated structure when $\Delta(x)$ is a power of a linear polynomial.  The pro-isomorphic zeta functions of $\mathcal{L}_\Delta$ are obtained  in~\cite{BKO/even} for $\Delta(x) = x^2$ and $\Delta(x) = x^3$ after computations substantially more involved than the ones in Section~\ref{sec:main.computation}; it is notable that the simplifying assumptions used in~\cite{duSLubotzky/96} to analyze the integrals~\eqref{equ:padic.integral} do not hold in these cases.

\subsection{Related work and questions} \label{sec:related.work}
This section mentions some results related to our work, as well as directions for future research.

\subsubsection{$D^\ast$-Lie lattices of odd rank} \label{sec:d.star}
A $D^\ast$-group is a radicable, finitely generated, class-two-nilpotent, and torsion free group with finite Hirsch length and with a derived subgroup of Hirsch length two.  Grunewald and Segal~\cite[\S6]{GS/84} classified $D^\ast$-groups up to commensurability.  They showed that every $D^\ast$-group has a central decomposition into indecomposable constituents, which are unique up to isomorphism.  The Lie lattices corresponding to indecomposable $D^\ast$-groups of even Hirsch length are precisely the family $\mathcal{L}_\Delta$, parametrized by primary polynomials $\Delta(x) \in \Z[x]$, that is considered in this article.  The indecomposable $D^\ast$-groups of odd Hirsch length were also determined in~\cite{GS/84}.  The pro-isomorphic zeta functions of the associated Lie lattices, and indeed of a family of Lie lattices generalizing them, were obtained by a lengthy calculation by Berman, Klopsch, and Onn~\cite[Theorem~1.4]{BKO/18}; these have a somewhat different flavor from the functions of Theorem~\ref{thm:intro}.  The results of~\cite{BKO/18} were generalized in~\cite[Theorem~5.17]{BGS/22} to the pro-isomorphic zeta functions of the restriction of scalars to $\Z$ of the base extension of such Lie lattices to the ring of integers of an arbitrary number field.  

\subsubsection{Ideal zeta functions}
We note that the ideal zeta functions $\zeta^{\triangleleft}_{\mathcal{L}}(s)$, namely the Dirichlet series counting ideals of finite index in $\mathcal{L}$, were computed explicitly by Voll~\cite[Propositions~2 and~3]{Voll/04} for Lie lattices $\mathcal{L}$ corresponding to indecomposable $D^\ast$-groups of arbitrary Hirsch length.  He also computed~\cite[Theorem~1.1]{Voll/20} the ideal zeta functions of the generalized family considered in~\cite{BKO/18}.

\subsubsection{General $D^\ast$-groups}
As mentioned above, Lie lattices corresponding to general $D^\ast$-groups arise as central amalgamations of the lattices corresponding to indecomposable $D^\ast$-groups.  The pro-isomorphic zeta function of the central amalgamation of $n$ copies of the Heisenberg Lie lattice $\mathcal{H}$ was computed in~\cite[Theorem~5.10]{BGS/22}; see also~\cite[\S3]{duSLubotzky/96}.  The complexity of the expression obtained grows factorially with $n$, suggesting that computing the pro-isomorphic zeta functions of arbitrary $D^\ast$-groups remains a significant challenge.  By contrast, observe that an algorithm for computing the ideal zeta functions of arbitrary $D^\ast$ groups is given in~\cite[\S3.3]{Voll/04}; see also~\cite[Theorem~1.2]{BS/23}, which shows that ideal zeta functions behave well, in a precise way, under central amalgamation of copies of the same Lie lattice. 

\subsubsection{Uniformity on Frobenius sets}
A well-known consequence of Takagi's existence theorem in global class field theory states that if the Galois closure of a finite extension $K / \Q$ is non-abelian, then the set $\mathrm{Spl}_K$ of totally split primes in $K / \Q$ is not characterized, among the unramified primes, by any finite collection of congruences.  Precisely, there do not exist $m \in \N$ and $S \subseteq \{0, \dots, m-1 \}$ such that $\mathrm{Spl}_K = \{ p: p \equiv a \, \mathrm{mod} \, m, \, a \in S \}$; see, for instance,~\cite[Theorem~7.21]{Garbanati/81}.  In particular, Theorem~\ref{thm:intro} shows that for any monic irreducible $f(x) \in \Z[x]$ such that $K_f / \Q$ has non-abelian Galois closure, the function $p \mapsto \zeta^\wedge_{\mathcal{L}_f,p}(s)$ is not uniform on residue classes.  Corollary~\ref{thm:cubic} provides an example of this phenomenon; note that the Galois closure of $\Q(\sqrt[3]{2}) / \Q$ has Galois group $S_3$.  By contrast, it is follows from the proof of~\cite[Theorem~1.2]{Lagarias/83} that the set of primes of fixed decomposition type in $K_f / \Q$ is a Frobenius set, namely that it is defined by the solvability of a fixed collection of polynomial congruences; see~\cite{Lagarias/83} for precise definitions.  It was recently shown~\cite[Corollary~1.8]{StanojkovskiVoll/21} that the function assigning to a prime $p$ the order of the automorphism group of the group of $\mathbb{F}_p$-points of certain unipotent group schemes is polynomial on Frobenius sets, but not on residue classes.  It would be interesting to describe classes of enumerative problems of algebraic structures whose solution is uniform on Frobenius sets.

\section{Preliminaries}
This section contains two results that will be used in the computation of pro-isomorphic zeta functions and their functional equations that comprise the core of the paper.  We give their proofs here to avoid breaking the flow of the computation later.

\subsection{A combinatorial function}
We introduce a family of combinatorially defined functions in terms of which it will be convenient to express the local pro-isomorphic zeta functions $\zeta^\wedge_{\mathcal{L}_f, p}(s)$.  For every $r \in \N$, let $[r]$ denote the set $\{ 1, 2, \dots, r \}$.
\begin{dfn} \label{def:function}
Let $r \in \N$.  Let $\{ X_I \}_{I \subseteq [r]}$ be a collection of $2^r$ variables, one for each subset $I \subseteq [r]$.  We consider the following function in these variables:
$$ \Phi_r(\{ X_I \}_{I \subseteq [r]}) = \sum_{I \subseteq [r]} (-1)^{|I|} \frac{X_I}{1 - X_I}.$$
\end{dfn}

\begin{pro} \label{pro:funct.eq}
The function $\Phi_r$ satisfies the following self-reciprocity upon inversion of the variables:
$$ \Phi_r(\{ X_I^{-1} \}) = - \Phi_r(\{ X_I \}).$$
\end{pro}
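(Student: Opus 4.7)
The plan is to reduce the claim to a one-variable identity applied term-by-term and then invoke a trivial binomial cancellation. Specifically, I would start by observing that for any single variable $x$, a direct manipulation yields
\begin{equation*}
\frac{x^{-1}}{1 - x^{-1}} = \frac{1}{x - 1} = -\frac{1}{1 - x} = -1 - \frac{x}{1 - x}.
\end{equation*}
Applying this identity to each summand of $\Phi_r(\{X_I^{-1}\})$ converts the summand indexed by $I$ into $-1 - \frac{X_I}{1 - X_I}$, multiplied by $(-1)^{|I|}$.

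Next I would split the resulting expression into two pieces:
\begin{equation*}
\Phi_r(\{X_I^{-1}\}) = -\sum_{I \subseteq [r]} (-1)^{|I|} - \sum_{I \subseteq [r]} (-1)^{|I|} \frac{X_I}{1 - X_I}.
\end{equation*}
The second sum is exactly $\Phi_r(\{X_I\})$, while the first is the constant $-\sum_{I \subseteq [r]} (-1)^{|I|} = -(1 - 1)^r$, which vanishes for every $r \geq 1$. Combining these two observations gives the asserted self-reciprocity $\Phi_r(\{X_I^{-1}\}) = -\Phi_r(\{X_I\})$.

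I do not foresee a genuine obstacle here: the entire argument is a short algebraic computation resting on the elementary identity $\frac{x^{-1}}{1-x^{-1}} = -1 - \frac{x}{1-x}$ together with the vanishing of the alternating sum $\sum_{I \subseteq [r]}(-1)^{|I|}$ over all subsets of a nonempty set. The only point requiring a brief mention is the hypothesis $r \geq 1$ built into Definition~\ref{def:function} via $r \in \N$, which is exactly what makes the constant term drop out.
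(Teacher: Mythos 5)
Your proof is correct, and it takes a cleaner route than the paper's. Both arguments begin from the same elementary observation $\frac{x^{-1}}{1-x^{-1}} = \frac{-1}{1-x}$ and both ultimately rest on the vanishing of the alternating sum $\sum_{I \subseteq [r]} (-1)^{|I|} = (1-1)^r = 0$ for $r \geq 1$. But where the paper then clears denominators, writes both $\Phi_r(\{X_I^{-1}\})$ and $-\Phi_r(\{X_I\})$ over the common denominator $\prod_{J}(1-X_J)$, and verifies equality of the numerators by expanding and matching the coefficient of each square-free monomial $\prod_{J \in T} X_J$ (invoking the vanishing alternating sum once per monomial), you sidestep all of that with the further partial-fraction split $\frac{-1}{1-x} = -1 - \frac{x}{1-x}$. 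This isolates the discrepancy between $\Phi_r(\{X_I^{-1}\})$ and $-\Phi_r(\{X_I\})$ as the single constant $-\sum_{I}(-1)^{|I|}$, which vanishes in one stroke. Your version is shorter, avoids any bookkeeping with subsets $T \subseteq \mathcal{P}[r]$, and makes transparent exactly why the hypothesis $r \geq 1$ is needed; the paper's version has the mild advantage of exhibiting the two sides as literally identical rational functions in lowest common form, but that is not needed for the statement. No gaps.
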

\begin{proof}
Let $\mathcal{P}[r]$ denote the power set of $[r]$.  Writing the rational function over a common denominator, we find that
\begin{multline} \label{equ:functeq.lhs}
  \Phi_r(\{ X_I^{-1} \}) = \\ \sum_{I \in \mathcal{P}[r]} (-1)^{|I|} \frac{X^{-1}_I}{1 - X^{-1}_I} = \sum_{I \in \mathcal{P}[r]} \frac{(-1)^{|I| + 1}}{1 - X_I} = \frac{\sum_{I \in \mathcal{P}[r]} (-1)^{|I|+1} \prod_{J \in \mathcal{P}[r] \atop J \neq I} (1 - X_J) }{\prod_{J \in \mathcal{P}[r]} (1 - X_J)}.
  \end{multline}
Similarly,
\begin{equation} \label{equ:functeq.rhs}
 -\Phi_r(\{ X_I \}) = \frac{\sum_{I \in \mathcal{P}[r]} (-1)^{|I|+1} X_I \prod_{J \in \mathcal{P}[r] \atop J \neq I} (1 - X_J) }{\prod_{J \in \mathcal{P}[r]} (1 - X_J)}.
 \end{equation}
Thus it suffices to show that the numerators of the two expressions are the same.  Multiplying out the parentheses and computing the coefficient of the monomial $\prod_{J \in T} X_J$ for each $T \subseteq \mathcal{P}[r]$, we find that the numerator of~\eqref{equ:functeq.lhs} is
\begin{multline*}
\sum_{I \in \mathcal{P}[r]} (-1)^{|I|+1} \prod_{J \in \mathcal{P}[r] \atop J \neq I} (1 - X_J) = \sum_{I \in \mathcal{P}[r]}(-1)^{|I|+1} \sum_{T^\prime \subseteq \mathcal{P}[r] \atop I \not\in T^\prime} (-1)^{| T^\prime |} \prod_{J \in T^\prime} X_J = \\ \sum_{T \subseteq \mathcal{P}[r]} (-1)^{|T|+1} \left( \sum_{I \in \mathcal{P}[r] \setminus T} (-1)^{|I|} \right) \prod_{J \in T} X_J = \sum_{T \subseteq \mathcal{P}[r]} (-1)^{|T|} \left( \sum_{I \in T} (-1)^{|I|} \right) \prod_{J \in T} X_J,
\end{multline*}
where the last equality follows from the elementary observation that $\sum_{I \in \mathcal{P}[r]} (-1)^{|I|} = \prod_{i = 1}^r (1 - 1) = 0$.  Analogously, the numerator of~\eqref{equ:functeq.rhs} is
\begin{multline*}
\sum_{I \in \mathcal{P}[r]}(-1)^{|I|+1} X_I \sum_{T^\prime \subseteq \mathcal{P}[r] \atop I \not\in T^\prime} (-1)^{|T^\prime |} \prod_{J \in T^\prime} X_J = \sum_{I \in \mathcal{P}[r]} (-1)^{|I| + 1} \sum_{U \subseteq \mathcal{P}[r] \atop I \in U} (-1)^{|U| - 1} \prod_{J \in U} X_J = \\
\sum_{T \subseteq \mathcal{P}[r]} \left( \sum_{I \in T} (-1)^{|T| + |I|} \right) \prod_{J \in T} X_J,
\end{multline*}
where the first equality is obtained by setting $U = T^\prime \cup \{ I \}$.  This completes the proof of our claim.
\end{proof}

\subsection{The Cartan decomposition of $\mathrm{SL}_2(F)$, for a $p$-adic field $F$} \label{sec:haar.measure}
Let $p$ be a prime, and let $v_p$ be the normalized additive valuation on $\Q_p$.  Let $F / \Q_p$ be a finite extension with ring of integers $\mathcal{O}_F$.  Fix a uniformizer $\pi \in \mathcal{O}_F$.  Let $k_F = \mathcal{O}_F / (\pi)$ be the residue field, and let $q$ denote its cardinality.  Given $\lambda \in k_F^\times$, let $[ \lambda ] \in \mathcal{O}_F$ denote the $(q-1)$-st root of unity lifting $\lambda$, and set $[0] = 0$.  Let $I_0 = \{ 0 \}$, and for every $m \in \mathbb{N}$ define the set
$$ I_m = \left\{ [ \lambda_0 ] + \pi [ \lambda_1 ] + \cdots + \pi^{m-1} [ \lambda_{m-1} ] : (\lambda_0, \dots, \lambda_{m-1}) \in k_F^m \right\} \subset \mathcal{O}_F.$$
\begin{lem} \label{lem:coset.reps}
Let $F / \Q_p$ be a finite extension, and let $\pi \in \mathcal{O}_F$ be a uniformizer.  A list of representatives of right cosets of $\mathrm{SL}_2(\mathcal{O}_F)$ in $\mathrm{SL}_2(F)$ is given by
$$
\coprod_{m \geq 0} \left\{ \left( \begin{array}{cc} \pi^m & 0 \\ \pi^{-m} \kappa & \pi^{-m} \end{array} \right) : \kappa \in I_{2m} \right\} \coprod 
\coprod_{m \geq 1} \left\{ \left( \begin{array}{cc} 0 & - \pi^m \\ \pi^{-m} & - \pi^{-m+1} \kappa \end{array} \right) : \kappa \in I_{2m-1} \right\}.
$$
\end{lem}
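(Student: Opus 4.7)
The plan is to combine the Cartan decomposition of $\mathrm{SL}_2(F)$ with a $\mathbb{P}^1$-style parametrization of right cosets. Recall
\[
\mathrm{SL}_2(F) = \bigsqcup_{m \geq 0} K a_m K, \qquad K = \mathrm{SL}_2(\mathcal{O}_F), \quad a_m = \mathrm{diag}(\pi^m, \pi^{-m}),
\]
so the problem reduces, for each fixed $m \geq 0$, to enumerating the right $K$-cosets inside the double coset $K a_m K$. For this I will use the canonical bijection $(K \cap a_m^{-1} K a_m) \backslash K \leftrightarrow K \backslash K a_m K$ given by $k \mapsto K a_m k$, which transfers the problem to a coset space of $K$.

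Assume $m \geq 1$. A direct conjugation computation gives $K \cap a_m^{-1} K a_m = \{k \in K : k_{21} \in \pi^{2m} \mathcal{O}_F\}$; under the surjective reduction map $K \twoheadrightarrow \mathrm{SL}_2(\mathcal{O}_F/\pi^{2m})$, this subgroup is the preimage of the upper-triangular Borel. That Borel is the stabilizer of $[0:1]$ under the standard right action of $\mathrm{SL}_2$ on $\mathbb{P}^1$, and $\mathrm{SL}_2(\mathcal{O}_F/\pi^{2m})$ acts transitively on $\mathbb{P}^1(\mathcal{O}_F/\pi^{2m})$ (by completion of a unimodular row to a matrix of determinant one). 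I therefore obtain a bijection
\[
(K \cap a_m^{-1} K a_m) \backslash K \;\longleftrightarrow\; \mathbb{P}^1(\mathcal{O}_F/\pi^{2m}), \qquad (K \cap a_m^{-1} K a_m) k \longmapsto [k_{21} : k_{22}].
\]

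Next, rewriting each listed representative as $a_m$ times an element of $K$, a short computation gives
\[
\begin{pmatrix} \pi^m & 0 \\ \pi^{-m}\kappa & \pi^{-m} \end{pmatrix} = a_m \begin{pmatrix} 1 & 0 \\ \kappa & 1 \end{pmatrix}, \qquad \begin{pmatrix} 0 & -\pi^m \\ \pi^{-m} & -\pi^{-m+1}\kappa \end{pmatrix} = a_m \begin{pmatrix} 0 & -1 \\ 1 & -\pi\kappa \end{pmatrix},
\]
so type~1 representatives correspond to the points $[\kappa : 1]$ and type~2 representatives to $[1 : -\pi\kappa]$ in $\mathbb{P}^1(\mathcal{O}_F/\pi^{2m})$. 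Every point of $\mathbb{P}^1(\mathcal{O}_F/\pi^{2m})$ is uniquely of the form $[x : 1]$ with $x \in \mathcal{O}_F/\pi^{2m}$ or $[1 : y]$ with $y \in \pi(\mathcal{O}_F/\pi^{2m})$. Since $I_{2m}$ represents $\mathcal{O}_F/\pi^{2m}$ by construction, the type~1 family $\{[\kappa : 1] : \kappa \in I_{2m}\}$ bijects with the first subset, and since $\{-\pi \kappa : \kappa \in I_{2m-1}\}$ is a set of representatives for $\pi \mathcal{O}_F/\pi^{2m}$, the type~2 family bijects with the second. The case $m = 0$ is trivial: $a_0$ is the identity, $K \backslash K a_0 K$ consists of the single coset $K$, and the list contains exactly one matrix, the identity, realised as the sole type~1 representative with $\kappa = 0 \in I_0$.

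I expect the main technical content to lie in the identification in the second paragraph: verifying that $K \cap a_m^{-1} K a_m$ coincides with the $\mathbb{P}^1$-stabilizer requires the explicit matrix conjugation, and the surjectivity of $K \twoheadrightarrow \mathrm{SL}_2(\mathcal{O}_F/\pi^{2m})$ together with the transitivity of its action on $\mathbb{P}^1(\mathcal{O}_F/\pi^{2m})$ must be invoked. Once the bijection with $\mathbb{P}^1$ is in place, recognising the two families of listed matrices as the two standard affine charts of $\mathbb{P}^1$ over the finite local ring $\mathcal{O}_F/\pi^{2m}$ is straightforward.
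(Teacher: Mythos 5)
Your proposal is correct and follows essentially the same route as the paper: the Cartan decomposition $\mathrm{SL}_2(F)=\coprod_m K a_m K$ together with the standard bijection between right $K$-cosets in $Ka_mK$ and $(K\cap a_m^{-1}Ka_m)\backslash K$. The paper leaves the remaining enumeration as ``a straightforward computation,'' which you carry out explicitly via the identification with $\mathbb{P}^1(\mathcal{O}_F/\pi^{2m})$ and its two affine charts; this is a clean way to organize that verification and all the details (the conjugation computation, the surjectivity of reduction, the matching of the two families of representatives with the charts) check out.
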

\begin{proof}
Set $\delta = \left( \begin{array}{cc} \pi & 0 \\ 0 & \pi^{-1} \end{array} \right) \in \mathrm{SL}_2(F)$.  
From the Cartan decomposition
$$ \mathrm{SL}_2(F) = \coprod_{m \geq 0} \mathrm{SL}_2(\mathcal{O}_F) \delta^m \mathrm{SL}_2(\mathcal{O}_F),$$
setting $K_m = \mathrm{SL}_2(\mathcal{O}_F) \cap \delta^{-m} \mathrm{SL}_2(\mathcal{O}_F) \delta^m$ for $m \geq 0$, one deduces the decomposition
$$ \mathrm{SL}_2(F) = \coprod_{m \geq 0} \coprod_{K_m k \in K_m \backslash \mathrm{SL}_2(\mathcal{O}_F)} \mathrm{SL}_2(\mathcal{O}_F) \delta^m k$$
of $\mathrm{SL}_2(F)$ into right cosets of $\mathrm{SL}_2(\mathcal{O}_F)$.  The claim follows by a straightforward computation.  An alternative list of coset representatives may be obtained from~\cite[Proposition~1.1]{Abdellatif/14}, noting that {\emph{left}} cosets of $\mathrm{SL}_2(\mathcal{O}_F)$ correspond to vertices of the Bruhat-Tits tree of $\mathrm{SL}_2(F)$ lying at an even distance from $v_0$, in the notation of~\cite{Abdellatif/14}.
\end{proof}

\begin{cor} \label{cor:measure.computation}
Let $e$ denote the ramification degree of $F / \Q_p$.  Let $\nu_F$ be the right Haar measure on $\mathrm{SL}_2(F)$, with the normalization $\mu(\mathrm{SL}_2(\mathcal{O}_F)) = 1$.  For $a \in \Z_p \setminus \{ 0 \}$, set 
$$S_F(a) = \left\{ \left( \begin{array}{cc} \alpha_{11} & \alpha_{12} \\ \alpha_{21} & \alpha_{22} \end{array} \right) \in \mathrm{SL}_2(F) : \left( \begin{array}{cc} a \alpha_{11} & \alpha_{12} \\ a \alpha_{21} & \alpha_{22} \end{array} \right) \in \mathrm{M}_2(\mathcal{O}_F) \right\}.$$
Then $\nu_F(S_F(a)) = \frac{1 - q^{e v_p(a) + 1}}{1 - q}$.
\end{cor}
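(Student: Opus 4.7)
The plan is to reduce the measure computation to a coset count via Lemma~\ref{lem:coset.reps}, exploiting the fact that membership in $S_F(a)$ is preserved under left multiplication by $\mathrm{SL}_2(\mathcal{O}_F)$.  Indeed, if $g = (\alpha_{ij}) \in S_F(a)$ and $h \in \mathrm{SL}_2(\mathcal{O}_F)$, then each entry of the first column of $hg$, after scaling by $a$, is an $\mathcal{O}_F$-linear combination of $a\alpha_{11}$ and $a\alpha_{21}$, while each entry of its second column is an $\mathcal{O}_F$-linear combination of $\alpha_{12}$ and $\alpha_{22}$; all of these lie in $\mathcal{O}_F$, so $hg \in S_F(a)$.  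Consequently $S_F(a)$ is a disjoint union of right cosets $\mathrm{SL}_2(\mathcal{O}_F) g$, and by right-invariance of $\nu_F$ together with the normalization $\nu_F(\mathrm{SL}_2(\mathcal{O}_F)) = 1$, each such coset has measure $1$.  Hence $\nu_F(S_F(a))$ simply counts those representatives in Lemma~\ref{lem:coset.reps} that themselves belong to $S_F(a)$.

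Next I would test each representative using the normalized additive valuation $v_F$ on $F$, noting that $v_F(a) = e\, v_p(a)$ for $a \in \Z_p$.  For a representative from the first family in Lemma~\ref{lem:coset.reps}, the $(2,2)$-entry $\pi^{-m}$ is not scaled by $a$ and hence must already lie in $\mathcal{O}_F$; this forces $m = 0$ and $\kappa = 0$, yielding a single contributing coset (the one containing the identity).  For a representative from the second family, with parameters $m \geq 1$ and $\kappa \in I_{2m-1}$, the entries of the first row are automatically integral, while integrality of the scaled $(2,1)$-entry $a\pi^{-m}$ requires $m \leq e\, v_p(a)$, and integrality of the $(2,2)$-entry $-\pi^{-m+1}\kappa$ requires $v_F(\kappa) \geq m - 1$.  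The latter constraint forces the first $m - 1$ coefficients in the expansion $\kappa = \sum_{i = 0}^{2m - 2} [\lambda_i] \pi^i$ to vanish, leaving exactly $q^m$ admissible values of $\kappa$ in $I_{2m-1}$.

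Summing these contributions yields
\[
\nu_F(S_F(a)) \;=\; 1 + \sum_{m = 1}^{e\, v_p(a)} q^m \;=\; \frac{q^{e\, v_p(a) + 1} - 1}{q - 1} \;=\; \frac{1 - q^{e\, v_p(a) + 1}}{1 - q},
\]
as claimed.  No single step is particularly deep; the principal bookkeeping task is to keep careful track of the fact that only the first column of $g$ is scaled by $a$, and to identify precisely which coefficients of $\kappa$ must vanish in order to compensate for the negative powers of $\pi$ appearing in the denominators of the second family of representatives.
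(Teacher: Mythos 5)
Your proposal is correct and follows essentially the same route as the paper's proof: observe that $S_F(a)$ is a union of right $\mathrm{SL}_2(\mathcal{O}_F)$-cosets, each of measure $1$, and count which representatives from Lemma~\ref{lem:coset.reps} lie in $S_F(a)$ by testing integrality of the (scaled) entries. The only difference is that you spell out the verification of left $\mathrm{SL}_2(\mathcal{O}_F)$-invariance and the entry-by-entry integrality checks, which the paper leaves implicit; the final count $1 + \sum_{m=1}^{e v_p(a)} q^m$ is identical.
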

\begin{proof}
It is easy to see that $S_F(a)$ is invariant under left multiplication by any element of $\mathrm{SL}_2(\mathcal{O}_F)$ and thus consists of a union of right cosets of $\mathrm{SL}_2(\mathcal{O}_F)$.  Observe that $a \mathcal{O}_F = \pi^{e v_p(a)} \mathcal{O}_F$.
Among the coset representatives listed in Lemma~\ref{lem:coset.reps}, the ones contained in $S_F(a)$ are precisely $\left( \begin{array}{cc} 1 & 0 \\ 0 & 1 \end{array} \right)$ and $\left( \begin{array}{cc} 0 & - \pi^m \\ \pi^{-m} & -\pi^{-m+1} \kappa \end{array} \right)$ for $m \in [ e v_p(a) ]$ and $\kappa = \sum_{i = 1}^{2m-2} \pi^i [\lambda_i ]$ divisible by $\pi^{m-1}$, i.e.~satisfying $\lambda_0 = \cdots = \lambda_{m-2} = 0$.  There are
$$ 1 + \sum_{m = 1}^{e v_p(a)} q^m = \frac{1 - q^{e v_p(a) + 1}}{1 - q}$$
of these.  Since each right coset has measure $1$, the claim follows.
\end{proof}

\section{Computation} \label{sec:main.computation}
\subsection{The algebraic automorphism group}
Let $f(x) \in \Z [x]$ be an irreducible monic polynomial of degree $n \geq 3$.  Consider the primary polynomial $\Delta(x) = f(x)^\ell$ for $\ell \in \N$, and set $K_\Delta = \Q[x] / (\Delta(x))$; this ring has dimension $\ell n$ as a $\Q$-vector space.  To describe the algebraic automorphism group of $L_\Delta$ we define three algebraic subgroups of $\mathbf{GL}_{2\ell n + 2}$.
There is a morphism of algebraic groups
$\rho_2 : \mathrm{Res}_{K_\Delta / \Q} \mathbf{SL}_2 \to \mathbf{SL}_{2\ell n + 2}$ given by
\begin{equation*}
\rho_2 \left( \begin{array}{cc} \alpha_{11} & \alpha_{12} \\ \alpha_{21} & \alpha_{22} \end{array} \right) = \left( \begin{array}{ccc} \iota(\alpha_{11}) & \iota(\alpha_{12}) & \\ \iota(\alpha_{21}) & \iota(\alpha_{22}) & \\ & & I_2 \end{array} \right),
\end{equation*}
where for any $\Q$-algebra $R$ the map $\iota: K_\Delta \tensor_{\Q} R \to \mathrm{M}_{\ell n}(R)$ is determined by $\iota(\beta^i \tensor r) = r C_\Delta^i$ for any $i \in \N \cup \{ 0 \}$ and $r \in R$, and $\beta = x + (\Delta(x)) \in K_\Delta$.  Equivalently, $\iota(\alpha)$ is the matrix of the $R$-linear endomorphism of $K_\Delta \tensor_{\Q} R$ corresponding to multiplication by $\alpha$, with respect to the basis $(\beta^i \tensor 1)_{i = 0}^{\ell n - 1}$.  By a standard exercise in linear algebra, or~\cite[Theorem~1]{KSW/99}, the image of $\rho_2$ is indeed contained in $\mathbf{SL}_{2 \ell n + 2}$.

Consider the embedding of algebraic groups $\rho_1 : \mathbf{G}_m \to \mathbf{GL}_{2 \ell n + 2}$ given by 
\begin{equation*}
 \rho_1(a)  =  \left( \begin{array}{ccc} aI_{\ell n} & 0 & 0 \\ 0 & I_{\ell n} & 0 \\ 0 & 0 & a I_2 \end{array} \right)
 \end{equation*}
 and the embedding $\rho_3 : \mathbf{G}_a^{4 \ell n} \to \mathbf{SL}_{2 \ell n + 2}$ given by
 \begin{equation*}
 \rho_3(c_1, \dots, c_{4\ell n})  =  \left( \begin{array}{ccccc} 1 & &  & c_1 & c_{2 \ell n + 1} \\
 & \ddots & & \vdots & \vdots \\
 & & 1 & c_{2 \ell n} & c_{4 \ell n} \\
 & & & 1 & 0 \\
 & & & 0 & 1 \end{array} \right).
 \end{equation*}

Recall that our chosen $\Z$-basis of $\mathcal{L}_\Delta$ allows us to identify the algebraic automorphism group $\mathbf{G}_\Delta$ of $L_\Delta$ with an algebraic subgroup of $\mathbf{GL}_{2 \ell n + 2}$.  Its structure, which was determined by Berman, Klopsch, and Onn, consists essentially of an internal semi-direct product of the three subgroups of $\mathbf{GL}_{2 \ell n + 2}$ just defined.  There exists a symmetric matrix $\sigma \in \mathrm{GL}_{\ell n}(\Z)$ such that $\sigma C_\Delta \sigma^{-1} = C_\Delta^T$; see~\cite[\S2.1]{BKO/even}.  Set 
$$ \Sigma = \left( \begin{array}{ccc} I_{\ell n} & & \\ & \sigma & \\ & & I_2 \end{array} \right) \in \mathrm{GL}_{2 \ell n + 2} (\Z).$$
\begin{pro} \label{pro:automorphism.structure}
Let $\Delta(x) = f(x)^\ell$, where $f(x) \in \Z[x]$ is an irreducible monic polynomial of degree $n \geq 3$.  Then
$$
\mathbf{G}_\Delta = (\rho_3(\mathbf{G}_a^{4 \ell n}) \rtimes \Sigma (\rho_2(\mathrm{Res}_{K_\Delta / \Q} \mathbf{SL}_2)) \Sigma^{-1} ) \rtimes \rho_1 (\mathbf{G}_m),$$
where the action in each internal semi-direct product is by conjugation.
\end{pro}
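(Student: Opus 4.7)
The proof follows the approach indicated in \cite{BKO/even}. Throughout let $Z = \langle z_1, z_2 \rangle$, which equals both $Z(L_\Delta)$ and $[L_\Delta, L_\Delta]$, and set $V = L_\Delta / Z$, $X = \langle x_1, \dots, x_{\ell n} \rangle$, $Y = \langle y_1, \dots, y_{\ell n} \rangle$, so $V = X \oplus Y$. The Lie bracket descends to a $Z$-valued pairing $\omega : V \wedge V \to Z$; restricted to $X \times Y$, its $z_1$- and $z_2$-coefficients are the bilinear forms $\omega_1 = I$ and $\omega_2 = C_\Delta$ in the given bases.

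First, I would verify that each of the three factors lies in $\mathbf{G}_\Delta$. The cases of $\rho_1(\mathbf{G}_m)$ and $\rho_3(\mathbf{G}_a^{4\ell n})$ are immediate: $\rho_1(a)$ scales $X$ and $Z$ uniformly by $a$ and fixes $Y$, while $\rho_3$ adds central elements and so acts trivially on $V$ and $Z$. The middle factor requires the identity $\sigma C_\Delta \sigma^{-1} = C_\Delta^T$ together with the commutativity of $\iota(K_\Delta \tensor R) = R[C_\Delta]$, from which one checks that the conjugated block matrix $\begin{pmatrix} A & B\sigma^{-1} \\ \sigma C & \sigma D \sigma^{-1} \end{pmatrix}$ (with $A,B,C,D \in R[C_\Delta]$ and $AD - BC = I$) preserves both $\omega_1$ and $\omega_2$. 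Normality of $\rho_3(\mathbf{G}_a^{4\ell n})$ is immediate, as it is precisely the kernel of the homomorphism $\mathbf{G}_\Delta \to \mathbf{GL}(V) \times \mathbf{GL}(Z)$ induced by the quotient; that $\rho_1(\mathbf{G}_m)$ normalizes $\Sigma \rho_2(\mathrm{Res}_{K_\Delta/\Q}\mathbf{SL}_2)\Sigma^{-1}$ follows from a short direct block computation.

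The main content is the exhaustion step. Given $\phi \in \mathbf{G}_\Delta(E)$, passage to quotients induces $\bar\phi \in \mathbf{GL}(V)(E)$ and $\hat\phi \in \mathbf{GL}_2(E)$. Multiplying by a suitable element of $\rho_3(\mathbf{G}_a^{4\ell n})$, we may assume $\phi(X \oplus Y) \subseteq X \oplus Y$, so that $\phi$ is determined by $(\bar\phi, \hat\phi)$. Writing $\bar\phi$ in block form $\begin{pmatrix} A & B \\ C & D \end{pmatrix}$ and $\hat\phi = \begin{pmatrix} \alpha & \beta \\ \gamma & \delta \end{pmatrix}$, the condition $\omega(\bar\phi v, \bar\phi w) = \hat\phi\,\omega(v,w)$ together with $[x_i,x_j] = [y_i,y_j] = 0$ translates into the matrix identities
$$
A^T D - C^T B = \alpha I + \beta C_\Delta, \qquad A^T C_\Delta D - C^T C_\Delta^T B = \gamma I + \delta C_\Delta,
$$
together with symmetry constraints on $A^T C$, $A^T C_\Delta C$, $B^T D$, $B^T C_\Delta D$. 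Absorbing the scalar ambiguity of $\hat\phi$ into the $\rho_1(\mathbf{G}_m)$ factor reduces to $\hat\phi \in \mathbf{SL}_2$. The target then becomes $A, D \in E[C_\Delta]$ and $\sigma^{-1} B, \sigma^{-1} C \in E[C_\Delta]$ with $AD - BC = I$, which realizes $\bar\phi$ as an element of $\Sigma \rho_2(\mathrm{Res}_{K_\Delta/\Q} \mathbf{SL}_2)\Sigma^{-1}$.

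The hard step is this last deduction. The key input is that $C_\Delta$ is non-derogatory --- its minimal polynomial coincides with its characteristic polynomial $\Delta(x)$ --- so its centralizer in $\mathrm{M}_{\ell n}(E)$ is the commutative ring $E[C_\Delta] \simeq K_\Delta \tensor_\Q E$ of $E$-dimension $\ell n$. Using that $\bar\phi$ must stabilize the pencil $\{\alpha \omega_1 + \beta \omega_2 : (\alpha,\beta) \in E^2\}$ up to the $\mathbf{SL}_2$-action of $\hat\phi$, one extracts from the matrix identities above that $A,B,C,D$ factor through this commutant via the $\sigma$-twist on the off-diagonal entries. The hypothesis $\deg f \geq 3$ is essential: for $f$ linear, the pencil collapses onto the scalar matrices and $\mathbf{G}_\Delta$ acquires additional degrees of freedom, invalidating the clean semi-direct decomposition.
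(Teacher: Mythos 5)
Your proposal is a from-scratch derivation, whereas the paper's proof is essentially a reduction to two cited structure theorems from BKO: Theorem~2.3 of~\cite{BKO/even}, which gives the structure of the subgroup $\mathbf{G}_{0,\Delta}$ of automorphisms acting trivially on the center, and Theorem~1.4 of~\cite{BKO/evenv2}, which shows that for $n \geq 3$ every automorphism acts on the center by a scalar. Attempting a direct proof is a reasonable choice, but as written there is a genuine gap precisely at the place where the paper invokes~\cite[Theorem~1.4]{BKO/evenv2}.

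The problematic step is your sentence ``Absorbing the scalar ambiguity of $\hat\phi$ into the $\rho_1(\mathbf{G}_m)$ factor reduces to $\hat\phi \in \mathbf{SL}_2$.'' Since $\rho_2$ has $I_2$ in the center block and $\rho_3$ acts trivially on $Z$, the target of your exhaustion argument forces $\hat\phi = aI_2$ after factoring out $\rho_1(a)$; reducing the determinant to $1$ is both insufficient (you ultimately need $\hat\phi = I_2$, not merely $\det\hat\phi = 1$) and not achievable over a general field $E$ unless one already knows that $\det\hat\phi$ is a square, which is itself a consequence of $\hat\phi$ being scalar. In other words, the assertion that $\hat\phi$ acts as a scalar on $Z = \langle z_1, z_2 \rangle$ is the nontrivial content that the hypothesis $n \geq 3$ buys, and nothing in your argument actually establishes it. Your closing remark compounds this: you explain why $f$ linear ($n = 1$) is excluded, but the hypothesis also excludes $n = 2$, and for a good reason --- as the paper's remark following the proposition explains, for an irreducible quadratic $f$ the automorphisms act on the center via a group isomorphic to $K_f^\times$, so $\hat\phi$ need not be scalar and the semi-direct product with $\rho_1(\mathbf{G}_m)$ as stated is false. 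Your ``hard step'' discussion (that $A,B,C,D$ must lie in the commutant $E[C_\Delta]$, twisted by $\sigma$ on the off-diagonal blocks, because $C_\Delta$ is non-derogatory) is the right idea for reconstructing $\mathbf{G}_{0,\Delta}$, but it is only sketched, and in any case it is logically downstream of the unproven scalarity of $\hat\phi$.
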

\begin{proof}
The subgroup $\mathbf{G}_{0, \Delta} \subset \mathbf{G}_\Delta$ of automorphisms acting trivially on the center $\langle z_1, z_2 \rangle$ is described by~\cite[Theorem~2.3]{BKO/even} and its proof and is $\rho_3 (\mathbf{G}_a^{4 \ell n}) \rtimes \Sigma (\rho_2 (\mathrm{Res}_{K_\Delta / \Q} \mathbf{SL}_2)) \Sigma^{-1}$.  Under the assumption $n \geq 3$, every automorphism acts on the center as a scalar by~\cite[Theorem~1.4]{BKO/evenv2}; this case is not treated in the final version of~\cite{BKO/even}, which focuses on $n = 1$.  It is easy to check that $\rho_1(\mathbf{G}_m) \subset \mathbf{G}_\Delta$.  Thus, for any field extension $E/\Q$, any element of $\mathbf{G}_\Delta(E)$ may be expressed uniquely as a product of an element of $\rho_1(E^\times)$ and one of $\mathbf{G}_{0,\Delta}(E)$, and the claim follows.
\end{proof}

\begin{rem}
Observe that when $f(x) \in \Z[x]$ is an irreducible monic quadratic polynomial, the expression of Theorem~\ref{thm:intro} coincides with the correct local factor $\zeta^\wedge_{\mathcal{L}_f,p}(s)$, as given in Theorem~\ref{thm:quadratic}, when $p$ is inert in $K_f$, but not for the remaining two decomposition types.  We showed in Section~\ref{sec:quadratic} that $\mathcal{L}_f \tensor_{\Z} \Z_p \simeq (\mathcal{H} \tensor_{\Z} \mathcal{O}_{K_f}) \tensor_{\Z} \Z_p$ when $p$ is coprime to $\mathcal{F}_f$, and this gives rise to extra symmetries of $\mathcal{L}_f$.  
Indeed, by~\cite[Theorem~1.4]{BKO/evenv2}, the structure of $\mathbf{G}_f(\Q_p)$ in this case is described by Proposition~\ref{pro:automorphism.structure}, except that $\rho_1(\mathbf{G}_m(\Q_p))$ is replaced by a group isomorphic to $K_f^\times$ rather than $\Q_p^\times$.
\end{rem}

\subsection{Notation} \label{sec:notation}
From now on we assume $\ell = 1$, namely that $\Delta(x) = f(x) \in \Z[x]$ is an irreducible monic polynomial of degree $n \geq 3$.  To simplify the notation, write $\mathbf{G} \subset \mathbf{GL}_{2n + 2}$ for $\mathbf{G}_\Delta$.  Similarly, write $K$ for $K_\Delta$; this is the number field $\Q(\beta)$, where $\beta$ is a root of $f(x)$.  Let $\mathcal{O}_K$ denote the ring of integers of $K$, and recall that the conductor $\mathcal{F}_f$ is the largest ideal of $\mathcal{O}_K$ contained in $\Z [\beta]$.  

Now let $p$ be a rational prime that decomposes in $K$ as $p \mathcal{O}_K = \p_1^{e_1} \cdots \p_r^{e_r}$, where the distinct prime ideals $\p_i \triangleleft \mathcal{O}_K$ have residue fields $\mathcal{O}_K / \p_i$ of cardinality $q_i = p^{f_i}$.  Then $\Q_p \tensor_{\Q} K \simeq F_1 \times \cdots \times F_r$, where for every $i \in [r]$ we write $F_i$ for the localization $K_{\p_i}$.  Similarly, $\Z_p \tensor_{\Z} \mathcal{O}_K \simeq \mathcal{O}_{F_1} \times \cdots \times \mathcal{O}_{F_r}$.

Assume that $p$ is coprime to $\mathcal{F}_f$.  In this case $\Z_p  [x] / (f(x)) = \Z_p \tensor_{\Z} \Z [\beta] = \Z_p \tensor_{\Z} \mathcal{O}_K$.  

\subsection{Setup and evaluation of a $p$-adic integral}
It is immediate from Proposition~\ref{pro:automorphism.structure} that 
\begin{align*}
\mathbf{G}(\Q_p)  = & \rho_1(\Q_p^\times) \ltimes (\Sigma (\rho_2(\mathrm{SL}_2(\Q_p \tensor_{\Q} K))) \Sigma^{-1} \ltimes \rho_3 (\Q_p^{4n})) = \\
 & \rho_1 (\Q_p^\times) \ltimes \left( \Sigma \left( \rho_2 \left( \prod_{i = 1}^r \mathrm{SL}_2(F_i) \right) \right) \Sigma^{-1} \ltimes \rho_3 (\Q_p^{4n}) \right).
\end{align*}
We now explicitly determine the two subsets of $\mathbf{G}(\Q_p)$ necessary for our calculation.
\begin{lem} \label{lem:decompositions}
Suppose that $p$ is coprime to the conductor $\mathcal{F}_f$.
Suppose that $a \in \Q_p^{\times}$, that $A = \left( \begin{array}{cc} \alpha_{11} & \alpha_{12} \\ \alpha_{21} & \alpha_{22} \end{array} \right) \in \mathrm{SL}_2(\Q_p \tensor_{\Q} K)$, and that $\mathbf{c} = (c_1, \dots, c_{4n}) \in \Q_p^{4n}$.  
Then $\rho_3(\mathbf{c}) \Sigma \rho_2(A) \Sigma^{-1} \rho_1(a) \in G(\Z_p)$ if and only if $a \in \Z_p^\times$, whereas $A \in \mathrm{SL}_2(\Z_p \tensor_{\Z} \mathcal{O}_K)$ and $\mathbf{c} \in \Z_p^{4n}$.  Given $a \in \Z_p \setminus \{ 0 \}$, define
\begin{align*}
G_2^+(a) & = & &\left\{ A \in \mathrm{SL}_2(\Q_p \tensor_{\Q} K) :  \left( \begin{array}{cc} a \alpha_{11} & \alpha_{12} \\ a \alpha_{21} & \alpha_{22} \end{array} \right) \in \mathrm{M}_2(\Z_p \tensor_{\Z} \mathcal{O}_K) \right\} \\
G_3^+(a) & = & &\{ \mathbf{c} \in \Q_p^{4n} : (ac_1, \dots, ac_{4n}) \in \Z_p^{4n} \}.
\end{align*}
Then $\rho_3(\mathbf{c}) \Sigma \rho_2(A) \Sigma^{-1} \rho_1(a) \in G^+(\Q_p)$ if and only if $a \in \Z_p \setminus \{ 0 \}$, while $A \in G_2^+(a)$ and $\mathbf{c} \in G_3^+(a)$.
\end{lem}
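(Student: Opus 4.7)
My strategy is a direct matrix expansion of $M := \rho_3(\mathbf{c}) \Sigma \rho_2(A) \Sigma^{-1} \rho_1(a)$, followed by reading off blockwise integrality conditions. First I would substitute the explicit formulas for $\rho_1, \rho_2, \rho_3$ and $\Sigma = \diag(I_n, \sigma, I_2)$ and multiply out, obtaining
\begin{equation*}
M = \begin{pmatrix} a \iota(\alpha_{11}) & \iota(\alpha_{12}) \sigma^{-1} & a C' \\ a \sigma \iota(\alpha_{21}) & \sigma \iota(\alpha_{22}) \sigma^{-1} & a C'' \\ 0 & 0 & a I_2 \end{pmatrix},
\end{equation*}
where $C', C''$ are $n \times 2$ matrices encoding $c_1, \ldots, c_{4n}$. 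Its determinant is $a^{n+2}$: the lower-right $2 \times 2$ block gives $a^2$, and the upper-left $2n \times 2n$ block factors as $\diag(I_n, \sigma) \, \iota(A) \, \diag(aI_n, \sigma^{-1})$, contributing $a^n \det \iota(A) = a^n$ by~\cite[Theorem~1]{KSW/99} and the fact that $A \in \mathrm{SL}_2$.

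Next I would translate blockwise integrality into intrinsic conditions on $A$ and $\mathbf{c}$. The key observation is that for $\alpha = \sum_{i = 0}^{n-1} a_i \beta^i \in \Q_p \tensor_{\Q} K$, the first column of $\iota(\alpha) = \sum_i a_i C_\Delta^i$ is $(a_0, \ldots, a_{n-1})^T$, so $\iota(\alpha) \in \mathrm{M}_n(\Z_p)$ if and only if $\alpha \in \Z_p[\beta]$. The hypothesis that $p$ is coprime to $\mathcal{F}_f$ yields $\Z_p[\beta] = \Z_p \tensor_{\Z} \mathcal{O}_K$, and since $\sigma \in \mathrm{GL}_n(\Z)$ is invertible over $\Z_p$, integrality of the four blocks in the top-left $2n \times 2n$ corner of $M$ is equivalent to $a\alpha_{11}, \alpha_{12}, a\alpha_{21}, \alpha_{22} \in \Z_p \tensor_{\Z} \mathcal{O}_K$, which is precisely the defining condition of $G_2^+(a)$. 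Similarly, the top-right $2n \times 2$ block is integral iff $ac_i \in \Z_p$ for all $i$, i.e., $\mathbf{c} \in G_3^+(a)$.

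Finally I would assemble the pieces. Any such $M$ lies in $\mathbf{G}(\Q_p)$ by construction, so $M \in G^+(\Q_p) = \mathbf{G}(\Q_p) \cap \mathrm{M}_{2n+2}(\Z_p)$ iff the bottom-right block forces $a \in \Z_p$, the determinant $a^{n+2}$ is nonzero, and the conditions on $A, \mathbf{c}$ above hold; this yields the second assertion. For $M \in G(\Z_p)$ one additionally needs $\det M \in \Z_p^\times$, forcing $a \in \Z_p^\times$, whereupon $G_2^+(a)$ collapses to $\mathrm{SL}_2(\Z_p \tensor_{\Z} \mathcal{O}_K)$ and $G_3^+(a)$ to $\Z_p^{4n}$, yielding the first assertion. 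The sole nontrivial ingredient, and the place where the conductor hypothesis is indispensable, is the identification $\Z_p[\beta] = \Z_p \tensor_{\Z} \mathcal{O}_K$; at primes dividing $\mathcal{F}_f$ the strict inclusion $\Z_p[\beta] \subsetneq \Z_p \tensor_{\Z} \mathcal{O}_K$ would prevent translating matrix integrality into $\mathcal{O}_K$-membership. Everything else is routine block-matrix bookkeeping.
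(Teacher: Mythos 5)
Your proof is correct and follows essentially the same strategy as the paper: compute the block form of $\rho_3(\mathbf{c}) \Sigma \rho_2(A) \Sigma^{-1} \rho_1(a)$, observe that $\iota(\alpha) \in \mathrm{M}_n(\Z_p)$ iff $\alpha \in \Z_p \tensor_{\Z} \Z[\beta] = \Z_p \tensor_{\Z} \mathcal{O}_K$ by the conductor hypothesis, and read off the blockwise integrality conditions using $\sigma \in \mathrm{GL}_n(\Z_p)$. You fill in a detail the paper leaves implicit, namely the explicit determinant computation $\det M = a^{n+2}$ used to conclude $a \in \Z_p^\times$ in the $G(\Z_p)$ case, but the argument is the same in substance.
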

\begin{proof}
A simple computation shows that
\begin{equation} \label{equ:product.of.matrices}
\rho_3(\mathbf{c}) \Sigma \rho_2(A) \Sigma^{-1} \rho_1(a) = \left( \begin{array}{ccc} 
a \iota(\alpha_{11}) & \iota(\alpha_{12}) \sigma^{-1} & a C_1 \\
a \sigma \iota(\alpha_{21}) & \sigma \iota(\alpha_{22}) \sigma^{-1} & a C_2 \\
0 & 0 & a I_2
\end{array} \right),
\end{equation}
where we use $\rho_1, \rho_2, \rho_3$ to denote the corresponding morphisms on $\Q_p$-points, and where
$$ \begin{array}{lr}
C_1 = \left( \begin{array}{cc} c_1 & c_{2n + 1} \\ \vdots & \vdots \\ c_n & c_{3n} \end{array} \right), \, &
C_2 = \left( \begin{array}{cc} c_{n+1} & c_{3n + 1} \\ \vdots & \vdots \\ c_{2n} & c_{4n} \end{array} \right) .
\end{array}
$$
Observe, given $\alpha \in \Q_p \tensor_{\Q} K$, that $\iota(\alpha) \in \mathrm{M}_n(\Z_p)$ if and only if $\alpha \in \Z_p \tensor_{\Z} \Z [\beta]$, which is equivalent to $\alpha \in \Z_p \tensor_{\Z} \mathcal{O}_K$ by our hypothesis on $p$.
Since $\sigma \in \mathrm{GL}_2(\Z) \subset \mathrm{GL}_2(\Z_p)$, the claim is now immediate from~\eqref{equ:product.of.matrices}.
\end{proof}

The previous claim allows us to express the pro-isomorphic zeta function $\zeta^\wedge_{\mathcal{L}_f, p}(s)$ as an iterated integral.  Indeed, let $\mu_1$ be the right Haar measure on $\Q_p^\times$, normalized so that $\mu_1(\Z_p^\times) = 1$.  Similarly, let $\mu_2$ and $\mu_3$ be the right Haar measures on $\mathrm{SL}_2(\Q_p \tensor_{\Q} K)$ and on $\Q_p^{4n}$, respectively, normalized to $\mu_2(\mathrm{SL}_2(\Z_p \tensor_{\Z} \mathcal{O}_K)) = 1$ and $\mu_3(\Z_p^{4n}) = 1$.  By the first part of Lemma~\ref{lem:decompositions}, these normalizations are compatible with that of the right Haar measure $\mu$ on $\mathbf{G}(\Q_p)$.
Then
\begin{align*} \label{equ:first.step} 
\zeta^\wedge_{\mathcal{L}_f, p}(s) & = & &\int_{G^+(\Q_p)} | \det g |^s_p d \mu (g) = \\ &&
&\int_{\Z_p \setminus \{ 0 \} } \int_{G_2^+(a)} \int_{G_3^+(a)} | \det \rho_3(\mathbf{c}) \Sigma \rho_2(A) \Sigma^{-1} \rho_1(a) |_p^s d \mu_3(\mathbf{c}) d \mu_2 (A) d \mu_1(a) = \\ &&
&\int_{\Z_p \setminus \{ 0 \} } \int_{G_2^+(a)} \int_{G_3^+(a)} |a|_p^{(n + 2)s} d \mu_3(\mathbf{c}) d \mu_2 (A) d \mu_1(a) = \\  &&
&\int_{\Z_p \setminus \{ 0 \} } \int_{G_2^+(a)} |a|_p^{(n + 2)s - 4n} d \mu_2 (A) d \mu_1(a).
\end{align*}
Here the first equality is~\eqref{equ:padic.integral}, the second follows from the second part of Lemma~\ref{lem:decompositions} and~\cite[Proposition~28]{Nachbin/65},  and the last equality holds because the integrand is constant on each set $G_3^+(a)$ and $\mu_3(G_3^+(a)) = |a|_p^{-4n}$ for every $a \in \Z_p \setminus \{ 0 \}$.  Since the integrand is also constant on each $G_2^+(a)$, we have
\begin{equation} \label{equ:first.step}
\zeta^\wedge_{\mathcal{L}_f, p}(s) = \int_{\Z_p \setminus \{ 0 \}} |a|_p^{(n+2)s - 4n} \mu_2(G_2^+(a)) d \mu_1(a).
\end{equation}
Recall the notation defined in Section~\ref{sec:notation}.
\begin{lem} \label{lem:measure.product}
Suppose that $p$ is coprime to $\mathcal{F}_f$.  Then $\mu_2 (G_2^+(a)) = \prod_{i = 1}^r \frac{1 - q_i^{e_i v_p(a) + 1}}{1 - q_i}$ for all $a \in \Z_p \setminus \{ 0 \}$.
\end{lem}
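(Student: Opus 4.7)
The plan is to reduce the computation of $\mu_2(G_2^+(a))$ to a product of the local computations already carried out in Corollary~\ref{cor:measure.computation}, using the hypothesis that $p$ is coprime to the conductor $\mathcal{F}_f$ to decompose everything in sight into a product indexed by the primes of $K$ above $p$.

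First I would invoke the hypothesis $p \nmid \mathcal{F}_f$, which as noted in Section~\ref{sec:notation} yields the identification $\Z_p \tensor_{\Z} \mathcal{O}_K = \Z_p[\beta]$ together with the product decompositions
\begin{equation*}
\Q_p \tensor_{\Q} K \simeq \prod_{i = 1}^r F_i, \qquad \Z_p \tensor_{\Z} \mathcal{O}_K \simeq \prod_{i = 1}^r \mathcal{O}_{F_i}.
\end{equation*}
These pass to $\mathrm{SL}_2$ to give isomorphisms of topological groups
\begin{equation*}
\mathrm{SL}_2(\Q_p \tensor_{\Q} K) \simeq \prod_{i=1}^r \mathrm{SL}_2(F_i), \qquad \mathrm{SL}_2(\Z_p \tensor_{\Z} \mathcal{O}_K) \simeq \prod_{i=1}^r \mathrm{SL}_2(\mathcal{O}_{F_i}).
\end{equation*}
By uniqueness of right Haar measure up to positive scalar, and because of the chosen normalization $\mu_2(\mathrm{SL}_2(\Z_p \tensor_{\Z} \mathcal{O}_K)) = 1$, the measure $\mu_2$ coincides under this isomorphism with the product measure $\prod_{i = 1}^r \nu_{F_i}$, where $\nu_{F_i}$ is the right Haar measure on $\mathrm{SL}_2(F_i)$ normalized so that $\nu_{F_i}(\mathrm{SL}_2(\mathcal{O}_{F_i})) = 1$.

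Next I would verify that $G_2^+(a)$ respects the product decomposition. Writing $A \in \mathrm{SL}_2(\Q_p \tensor_{\Q} K)$ as a tuple $(A_1, \dots, A_r)$ with $A_i \in \mathrm{SL}_2(F_i)$, the defining condition of $G_2^+(a)$ asks that the matrix obtained by multiplying the first column of $A$ by $a$ lies in $\mathrm{M}_2(\Z_p \tensor_{\Z} \mathcal{O}_K) \simeq \prod_{i = 1}^r \mathrm{M}_2(\mathcal{O}_{F_i})$. Since this condition is imposed entry by entry, and each entry decomposes across the factors, $A \in G_2^+(a)$ if and only if $A_i \in S_{F_i}(a)$ for every $i \in [r]$, where $S_{F_i}(a)$ is the set introduced in Corollary~\ref{cor:measure.computation}.

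Finally, combining the product decomposition of the measure with the product decomposition of the set, and invoking Corollary~\ref{cor:measure.computation} for each $i$ (with ramification degree $e_i$ and residue field cardinality $q_i$), I conclude
\begin{equation*}
\mu_2(G_2^+(a)) = \prod_{i = 1}^r \nu_{F_i}(S_{F_i}(a)) = \prod_{i = 1}^r \frac{1 - q_i^{e_i v_p(a) + 1}}{1 - q_i}.
\end{equation*}
There is no real obstacle here; the only point requiring care is the bookkeeping that justifies the factorization of the Haar measure and of the defining condition of $G_2^+(a)$, both of which rest squarely on the coprimality of $p$ to $\mathcal{F}_f$.
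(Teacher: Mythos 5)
Your proof is correct and follows the same route as the paper: decompose $\mathrm{SL}_2(\Q_p \tensor_{\Q} K)$, the Haar measure, and the set $G_2^+(a)$ into products indexed by the primes of $K$ above $p$, then invoke Corollary~\ref{cor:measure.computation} factor by factor. One small imprecision in your closing remark: the decomposition $\Z_p \tensor_{\Z} \mathcal{O}_K \simeq \prod_i \mathcal{O}_{F_i}$ holds for \emph{every} prime $p$, since $\mathcal{O}_K$ is the maximal order; the coprimality to $\mathcal{F}_f$ is what lets one identify $\Z_p[\beta]$ with $\Z_p \tensor_{\Z} \mathcal{O}_K$, which is needed in Lemma~\ref{lem:decompositions} to justify that $G_2^+(a)$ (defined via $\Z_p \tensor_{\Z} \mathcal{O}_K$) is the relevant domain in the first place, but is not actually needed for the factorization steps of the present lemma.
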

\begin{proof}
The decomposition $\mathrm{SL}_2(\Q_p \tensor_{\Q} K) = \prod_{i = 1}^r \mathrm{SL}_2(F_i)$ induces $\mathrm{SL}_2(\Z_p \tensor_{\Z} \mathcal{O}_K) = \prod_{i = 1}^r \mathrm{SL}_2(\mathcal{O}_{F_i})$ and $G_2^+(a) = \prod_{i = 1} S_{F_i}(a)$, for the sets $S_{F_i}(a)$ 
defined in Section~\ref{sec:haar.measure}, and
the Haar measure $\mu_2$ is the product of the measures $\nu_{F_i}$ defined there.  Hence the claim follows from Corollary~\ref{cor:measure.computation}.
\end{proof}

\begin{rem} \label{rem:no.cartan}
Lemma~\ref{lem:measure.product} is the step in our computation that obliges us to restrict to the case of irreducible $\Delta(x)$.  For a general primary polynomial $\Delta(x)$, it appears to be difficult to compute the measure of the set
$$
\left\{ A \in \mathrm{SL}_2(\Q_p[x]/(\Delta(x))) :  \left( \begin{array}{cc} a \alpha_{11} & \alpha_{12} \\ a \alpha_{21} & \alpha_{22} \end{array} \right) \in \mathrm{M}_2(\Z_p[x] / (\Delta(x))) \right\}
$$
in the absence of a suitable analogue of the $p$-adic Cartan decomposition.
\end{rem}

\subsection{Proof of Theorem~\ref{thm:intro}}  
We can now easily deduce the main result stated in the introduction.
Indeed, let $f(x) \in \Z[x]$ be an irreducible monic polynomial of degree $n \geq 3$.  Let $p$ be a prime coprime to $\mathcal{F}_f$ having decomposition type $(\mathbf{e}, \mathbf{f})$ in the number field $K_f = \Q(x)/(f(x))$.  
We deduce from~\eqref{equ:first.step} and Lemma~\ref{lem:measure.product} that 
$$ \zeta^\wedge_{\mathcal{L}_f, p}(s) = \int_{\Z_p \setminus \{ 0 \} } p^{(4n - (n + 2)s)v_p(a)} \prod_{i = 1}^r \frac{1 - p^{e_i f_i v_p(a) + f_i}}{1 - p^{f_i}} d\mu_1(a).$$
For any $v \geq 0$, we have $\mu_1(\{ a \in \Z_p : v_p(a) = v \}) = \mu_1(p^v \Z_p^\times) = 1$.  Hence
\begin{align*}
\zeta^\wedge_{\mathcal{L}_f, p}(s) & = && \sum_{v = 0}^\infty p^{(4n - (n + 2)s)v} \prod_{i = 1}^r \frac{1 - p^{e_i f_i v + f_i}}{1 - p^{f_i}} = \\ &&& \frac{\sum_{I \subseteq [r]} (-1)^{|I|} \sum_{v = 0}^\infty p^{(4n - (n + 2)s)v} \cdot p^{(\sum_{i \in I} e_i f_i )v + \sum_{i \in I} f_i}}{\prod_{i = 1}^r (1 - p^{f_i})},
\end{align*}
and by summing geometric series we find that $\zeta^\wedge_{\mathcal{L}_f, p}(s) = W_{\mathbf{e}, \mathbf{f}}(p, p^{-s})$ for 
\begin{equation} \label{equ:explicit.function}
W_{\mathbf{e}, \mathbf{f}}(X,Y) = 
\prod_{i = 1}^r \left( \frac{1}{1 - X^{f_i}} \right) \sum_{I \subseteq [r]} (-1)^{|I|} \frac{X^{\sum_{i \in I} f_i}}{1 - X^{4n + \sum_{i \in I} e_i f_i} Y^{n+2}},
\end{equation}
which indeed depends only on $\mathbf{e}$ and $\mathbf{f}$.  If $\mathbf{e} = \mathbf{1}$, we observe by inspection of~\eqref{equ:explicit.function} that
\begin{equation} \label{equ:wrt.phi}
W_{\mathbf{1}, \mathbf{f}}(X,Y) = \frac{1}{X^{4n}Y^{n+2} \prod_{i = 1}^r (1 - X^{f_i})} \Phi_r (\{ X_I \}_{I \subseteq [r]}),
\end{equation}
with $\Phi_r$ as in Definition~\ref{def:function} and $X_I = X^{4n + \sum_{i \in I} f_i} Y^{n+2}$ for all $I \subseteq [r]$.  The claimed functional equation~\eqref{equ:functional.eq} follows from Proposition~\ref{pro:funct.eq} and a simple calculation.

\subsection{Proof of Corollary~\ref{thm:cubic}}
If $f(x) = x^3 - 2$, then $K = K_f = \Q(\sqrt[3]{2})$, and it is a classical fact (see, for instance,~\cite[Theorem~6.4.13]{Cohen/93}) that $\mathcal{O}_K = \Z [\sqrt[3]{2}]$.  Thus $\mathcal{F}_f = (1)$ and Theorem~\ref{thm:intro} applies to all primes.  The discriminant of $K$ is $-108$, so the only ramified primes are $p \in \{ 2, 3 \}$, and one easily verifies that they are both totally ramified.  If $p > 3$, then it follows from~\cite[Corollary~6.4.15]{Cohen/93} and the characterization of the totally split primes in e.g.~\cite[Theorem~9.8]{Cox/13} that $p$ is totally split if $p = a^2 + 27b^2$ (which implies $p \equiv 1 \, \mathrm{mod} \, 3$), whereas $p \mathcal{O}_K = \p_1 \p_2$ with $f_1 = 1$ and $f_2 = 2$ if $p \equiv 2 \, \mathrm{mod} \, 3$ and $p$ is inert otherwise.  With this classification of primes by decomposition type in hand, the claimed formulas are obtained from Theorem~\ref{thm:intro} by straightforward computation.

\begin{acknowledgements}
We are grateful to the anonymous referee for helpful suggestions.
\end{acknowledgements}

\bibliographystyle{amsplain}
\bibliography{evendim.bib}
\end{document}